\numberwithin{equation}{section}
\def\t{\tau}
\def\R{\mathbb{R}}
\def\Z{\mathbb{Z}}
\def\Z{\mathbb{Z}}
\def\be{{\beta}}
\providecommand{\ip}[1]{\langle#1\rangle}
\providecommand{\abs}[1]{\left\lvert#1\right\rvert}
\providecommand{\norm}[1]{\left\|#1\right\|}
\def\be{\begin{equation}}
\def\ee{\end{equation}}
\newtheorem{theorem}{Theorem}[section]
\newtheorem{lemma}[theorem]{Lemma}
\newtheorem{proposition}[theorem]{Proposition}
\theoremstyle{remark}
\newtheorem{remark}[theorem]{Remark}
\def\beq{\begin{equation}}
\def\eeq{\end{equation}}
\def\beq{\begin{equation}}
\def\eeq{\end{equation}}
\begin{document}

\title{On the asymptotic behavior of solutions to the Vlasov-Poisson system}

\author{Alexandru D. Ionescu}
\address{Princeton University}
\email{aionescu@math.princeton.edu}
\author{Benoit Pausader}
\address{Brown University}
\email{benoit\_pausader@brown.edu}
\author{Xuecheng Wang}
\address{Tsinghua University}
\email{xuecheng@tsinghua.edu.cn}
\author{Klaus Widmayer}
\address{\'Ecole Polytechnique F\'ed\'erale de Lausanne}
\email{klaus.widmayer@epfl.ch}

\thanks{A.\ I.\ was supported in part by NSF grant DMS-1600028, B.\ P.\ was supported in part by NSF grant DMS-1700282, X.\ W.\ is supported by NSFC-11801299.}

\maketitle

\begin{abstract}

We prove small data modified scattering for the Vlasov-Poisson system in dimension $d=3$ using a method inspired from dispersive analysis. In particular, we identify a simple asymptotic dynamic related to the scattering mass.

\end{abstract}

\section{Introduction}

\subsection{The Vlasov-Poisson system}

We consider the Vlasov-Poisson system for a density function $f:\mathbb{R}^3_x\times\mathbb{R}^3_v\times\mathbb{R}_t\to\mathbb{R}_+$:
\begin{equation}\label{eq:VP_full}
\begin{cases}
\left(\partial_t+v\cdot\nabla_x\right)f-q\nabla_x\phi\cdot\nabla_vf=0, &q=\pm 1,\\
-\Delta_x\phi(x,t)=\int_{\mathbb{R}^3}f(x,v,t)dv,&\\
f(t=0,x,v)=f_0(x,v).
\end{cases}
\end{equation}
This model is relevant in plasma physics (usually for $q=-1$) and in astrophysics (for $q=1$); we refer to \cite{Glassey,Mouhot} for more background. In dimension $d=3$, solutions to \eqref{eq:VP_full} are global in time under rather mild assumptions \cite{LionsPerthame,Pfa}, but a complete understanding of their asymptotic behavior is still elusive. In the case of small data \cite{BardosDegond} provide decay estimates, and modified scattering was established in \cite{ChoiKwon}. Recently, these works have been revisited from different point of views \cite{HwangRendallVelazquez,Smu,Wang3,Wang4} with varying improvements.

The relationship between kinetic and dispersive equations, particularly the Schr\"odinger equation is classical, see e.g.\ \cite[Section 1.2]{LemMehRap} for a compelling presentation. A quantum analog of the Vlasov-Poisson system is the Hartree equation, which can be analyzed effectively using dispersive tools \cite{KatoPusateri}. In this paper we want to adapt classical methods from dispersive equations to recover a simple proof of small data/modified scattering for \eqref{eq:VP_full} based on energy estimates and convergence in a weaker norm. In particular, this allows to clarify the role of some of the assumptions and to isolate a particularly simple asymptotic dynamic. We hope that this framework will be useful when considering coupling of kinetic equations and other (particularly dispersive) equations (see \cite{Wang1,Wang2} for examples of such problems).

\subsection{Main result}

Assuming that the initial density is a nonnegative function $f\geq 0$ (as opposed to a measure), by the transport nature of $\eqref{eq:VP_full}$ this condition is propagated along the flow. We may thus introduce $\mu=\sqrt{f}$ and consider the more symmetric equation
\begin{equation}\label{VP}
\begin{split}
\left(\partial_t+v\cdot\nabla_x\right)\mu-q\nabla_x\phi\cdot\nabla_v\mu&=0,\qquad
-\Delta_x\phi=\int_{\mathbb{R}^3}\mu^2dv.
\end{split}
\end{equation}
Our main result can thus be stated as follows:
\begin{theorem}\label{MainTheorem}
There exists $\varepsilon^\ast>0$ such that for any $0<\varepsilon_0\le \varepsilon^\ast$, the following holds: if $\mu_0$ is a smooth initial data such that
\begin{equation}\label{InitialData}
\begin{split}
\mathcal{E}(\mu_0):=\Vert x\mu_0\Vert_{L^2_{x,v}}+\Vert \mu_0\Vert_{H^3_{x,v}}\le\varepsilon_0
\end{split}
\end{equation}
then there exists a unique solution to \eqref{VP} which is global and scatters. This solution satisfies
\begin{equation}\label{GlobalBounds}
\begin{split}
\Vert \gamma(t)\Vert_{L^\infty_vL^2_x}+\Vert\gamma(t)\Vert_{L^2_vH^3_x}\lesssim\varepsilon_0,\qquad \mathcal{E}(\gamma(t))\lesssim \varepsilon_0\ln^3\langle t\rangle(\ln\langle\ln\langle t\rangle\rangle)^{6},
\end{split}
\end{equation}
where $\gamma(x,v,t)=\mu(x+tv,v,t)$. In addition, letting
\begin{equation}
\begin{split}
m_\infty(v)&:=\lim_{t\to\infty}\Vert \mu(\cdot,-v,t)\Vert_{L^2_x}^2,\qquad
\widetilde{E}(v):=\int_{\mathbb{R}^3}\frac{\zeta}{\vert\zeta\vert^3}m_\infty(\zeta-v)d\zeta,
\end{split}
\end{equation}
we have modified scattering to a new density function
\begin{equation}\label{ModScatExplicit}
\begin{split}
\mu(x+tv+q\ln(t)\cdot\widetilde{E}(v),v,t)\to\gamma_\infty(x,v)\qquad\text{ in }L^\infty_vL^2_x\cap H^1_{x,v}.
\end{split}
\end{equation}
\end{theorem}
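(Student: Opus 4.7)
The strategy is to adapt classical dispersive techniques by passing to the profile
\begin{equation}\label{proposalProfDef}
\gamma(x,v,t) := \mu(x+tv,v,t),
\end{equation}
which removes the free streaming and exposes the true nonlinear dynamics. A direct computation yields the profile equation
\begin{equation}\label{proposalProfEq}
\partial_t\gamma = q(\nabla_x\phi)(x+tv,t)\cdot(\nabla_v - t\nabla_x)\gamma,
\end{equation}
while the change of variable $w = x - tv$ gives the density representation
\begin{equation}\label{proposalRho}
\rho(y,t) := \int_{\mathbb{R}^3}\mu^2(y,v,t)\,dv = \frac{1}{t^3}\int_{\mathbb{R}^3}\gamma^2\Bigl(w,\frac{y-w}{t},t\Bigr)\,dw.
\end{equation}
From \eqref{proposalRho} and $\|\gamma\|_{L^\infty_v L^2_x}\lesssim\varepsilon_0$ one deduces the dispersive decay $\|\rho(t)\|_{L^\infty_x}\lesssim\varepsilon_0^2\langle t\rangle^{-3}$, and via $-\Delta\phi = \rho$ the bounds $\|\nabla_x\phi(t)\|_{L^\infty}\lesssim\varepsilon_0^2\langle t\rangle^{-2}$ and $\|\nabla_x^2\phi(t)\|_{L^\infty}\lesssim\varepsilon_0^2\langle t\rangle^{-3}$, with at most polylogarithmic corrections from the weighted norms.

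These estimates drive a bootstrap for \eqref{GlobalBounds}. The bounded norm $\|\gamma\|_{L^\infty_v L^2_x}+\|\gamma\|_{L^2_v H^3_x}$ is controlled by $L^2_x$-energy estimates at fixed $v$: in \eqref{proposalProfEq} the term $-qt(\nabla_x\phi)(x+tv)\cdot\nabla_x\gamma$ is skew-symmetric in $x$ up to a commutator of size $t\|\nabla_x^2\phi\|_{L^\infty}\lesssim\langle t\rangle^{-2}$, which is time-integrable, while $q(\nabla_x\phi)\cdot\nabla_v\gamma$ is likewise perturbative; up to three $x$-derivatives propagate by the same argument. The weighted norm $\|x\gamma\|_{L^2_{x,v}}$ is the delicate one: commuting $x$ with \eqref{proposalProfEq} produces a forcing of size $t\|\nabla_x\phi\|_{L^\infty}\lesssim\langle t\rangle^{-1}$, whose time integral gives one $\ln\langle t\rangle$ per Sobolev level. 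This accounts for the factor $\ln^3\langle t\rangle$, while the $(\ln\langle\ln\langle t\rangle\rangle)^6$ tail absorbs residual losses at top derivative order in the estimate for $\nabla_x^2\phi$.

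The asymptotic statement rests on an expansion of $\nabla_x\phi$ along the free-streaming orbit. Inserting \eqref{proposalRho} into the Coulomb integral $(\nabla_x\phi)(x,t) = -\frac{1}{4\pi}\int\frac{x-y}{|x-y|^3}\rho(y,t)\,dy$ and changing variables $y = t\zeta$, a first-order Taylor expansion in $x/t$ yields
\begin{equation}\label{proposalE}
(\nabla_x\phi)(x+tv,t) = -\frac{1}{4\pi t^2}\int_{\mathbb{R}^3}\frac{v-\zeta}{|v-\zeta|^3}m(\zeta,t)\,d\zeta + R(x,v,t),
\end{equation}
where $m(\zeta,t) := \|\gamma(\cdot,\zeta,t)\|_{L^2_x}^2$ and the remainder obeys $\int_1^\infty t\|R(t)\|_{L^\infty_{x,v}}\,dt < \infty$ by the weighted bounds on $\gamma$. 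Testing \eqref{proposalProfEq} against $\gamma$ shows $|\partial_t m(\zeta,t)|\lesssim\varepsilon_0^3\langle t\rangle^{-2}$, so $m(\zeta,t)\to m_\infty(-\zeta)$ as $t\to\infty$; after reflection, the leading term in \eqref{proposalE} converges to $\widetilde{E}(v)/t^2$ up to normalization. Substituting into \eqref{proposalProfEq}, the dominant contribution is the transport $\frac{1}{t}q\widetilde{E}(v)\cdot\nabla_x\gamma$ (modulo the $4\pi$ convention), whose characteristic flow produces the logarithmic shift $q\ln(t)\widetilde{E}(v)$ in \eqref{ModScatExplicit}. Consequently the shifted profile $\widetilde\gamma(x,v,t) := \gamma(x+q\ln(t)\widetilde{E}(v),v,t)$ satisfies $\|\partial_t\widetilde\gamma\|_{L^\infty_v L^2_x\cap H^1_{x,v}}\in L^1_t$, hence converges to some $\gamma_\infty$; undoing \eqref{proposalProfDef} recovers \eqref{ModScatExplicit}.

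The main obstacle is the coupled nature of the bootstrap: the integrability of the remainder $R$ in \eqref{proposalE} requires weighted control on $\gamma$, yet those weighted norms themselves lose polylogarithms that must fit within the decay margin for $R$. The precise counting $\ln^3\langle t\rangle(\ln\langle\ln\langle t\rangle\rangle)^6$ in \eqref{GlobalBounds} reflects this cascade: each Sobolev level absorbs exactly one logarithm from the forcing $t\|\nabla_x\phi\|_{L^\infty}\lesssim\langle t\rangle^{-1}$, and the double logarithm handles finer losses at the highest derivative order where one needs slightly more than $L^\infty$ control on $\nabla_x^2\phi$ to bound the skew-symmetry commutator.
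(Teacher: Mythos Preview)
Your outline captures the correct overall architecture---profile $\gamma$, dispersive decay of $\phi$, bootstrap, then identification of the asymptotic electric field and logarithmic shift---and the modified scattering paragraph is essentially right. But there is a genuine gap in the bootstrap, and it stems from a misidentification of where the difficulty lies.

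You write that the weighted norm $\|x\gamma\|_{L^2_{x,v}}$ is ``the delicate one'' and that its growth ``per Sobolev level'' accounts for $\ln^3\langle t\rangle$. This is not the mechanism. In the commutation structure of \eqref{proposalProfEq}, the weight $x_j$ produces a forcing $qt\,\partial_j\phi(x+tv)\gamma$ of size $t\|\nabla_x\phi\|_{L^\infty}\lesssim\langle t\rangle^{-1}$, giving exactly \emph{one} logarithm for $\|x\gamma\|_{L^2}$; spatial derivatives $\partial_{x^j}$ produce forcing of size $\|\nabla_x^2\phi\|_{L^\infty}(1+t)\lesssim\langle t\rangle^{-2}$, which is integrable and yields no growth. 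The norm that is entirely missing from your argument is the \emph{velocity regularity} $\|\gamma\|_{L^2_xH^3_v}$: commuting $\partial_{v^j}$ with \eqref{proposalProfEq} produces an extra factor of $t$ relative to $\partial_{x^j}$, so the forcing is $qt\,\nabla_x\partial_{x^j}\phi(x+tv)\cdot(\nabla_v-t\nabla_x)\gamma$, whose worst piece is of size $t^2\|\nabla_x^2\phi\|_{L^\infty}\|\nabla_x\gamma\|\sim\langle t\rangle^{-1}$. This is the non-integrable term that generates one logarithm per $v$-derivative, and the $\ln^3\langle t\rangle(\ln\ln\langle t\rangle)^6$ bound is the outcome of a rather delicate quartic estimate (writing $\partial^2_{ij}\phi$ as a bilinear Riesz-type operator on $\gamma^2$ and splitting the kernel dyadically in scale) that you do not address.

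This omission is not cosmetic, because the decay you assume for $\nabla_x^2\phi$ already presupposes control of $\|\gamma\|_{L^2_xH^3_v}$: after the change of variables in \eqref{proposalRho}, each $x$-derivative on $\rho$ becomes $t^{-1}$ times a $v$-derivative on $\gamma$, so $\|\nabla_x^2\phi\|_{L^\infty}\lesssim\langle t\rangle^{-4}\|\gamma\|_{L^2_xH^3_v}^2$. Your bootstrap is therefore circular unless the velocity-derivative energy estimate is carried out, and that estimate---not the spatial weight---is the heart of the proof.
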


A few remarks are in order:
\begin{remark}

\noindent
\begin{enumerate}

\item This theorem is not new, except for the limited assumptions on the initial data and the explicit form of the asymptotic behavior \eqref{ModScatExplicit}. We feel however that the importance of this paper lies in the simplicity and versatility in the method developed. In addition it clarifies the relevance of various controls (velocity moment and vector fields seem less relevant, regularity in $v$ seems central).

\item Our assumptions are not comparable with the ones in \cite{BardosDegond,ChoiKwon}, but are weaker than in most recent works \cite{Smu,Wang3}. This is related to the fact that we rely on energy estimates rather than transport bounds which naturally give pointwise bounds without appealing to Sobolev inequality.

\item Under our assumptions without velocity moment, classical large data global existence \cite{LionsPerthame,Pfa} do not apply. In particular, our solutions can have infinite physical momentum and energy.

\item If one considers measure initial data (in particular {\it monokinetic} initial data), the asymptotic behavior can be radically different, thus some amount of regularity is needed.

\item Moving from the density $f$ to $\mu=\sqrt{f}$ has several advantages: $(i)$ it automatically accounts for the nonnegativity of the density, $(ii)$ it allows to separate further our functions from Dirac masses (the natural space is now $\mu\in L^2_{x,v}$ as opposed to $f\in L^1_{x,v}$), $(iii)$ it makes the analogy with cubic dispersive problems (in particular Hartree) more transparent. In order to take advantage of the Sobolev scale, one might want to work on $\widetilde{\mu}=f^{1/2p}$, $p\ge2$. However, smoothness of $\widetilde{\mu}$ then carries nontrivial implications for $f$.

\item In our setting, we consider solutions which are perturbations of the vacuum since by density we may assume that $\mu_0\in C^\infty_c(\mathbb{R}^3_{x}\times\mathbb{R}^3_v)$. There are other natural equilibria with nondecaying density (e.g.\ BGK solutions \cite{BGK}). The analysis of their perturbations is related to the study of the {\it Landau damping}, see \cite{BedMasMou,MouVil,NguGreRod} and relies on different ideas.

\item It is remarkable that one only gets logarithmic growth of the energy. In addition, for $1$ derivative, one obtains optimal growth in the sense that the upper and lower growth rates are equal up to a multiplicative constant (compare the third bound in \eqref{BootstrapCcl2} and \eqref{ModScatExplicit}).

\item The methods presented here have broad application for kinetic equations, however, in general, e.g.\ for relativistic models, we expect that control on velocity moments would be necessary. Indeed the change of variable associated to dispersion (see \eqref{DecRho}) would involve losses in $v$ (see \cite{Wang3}).

\end{enumerate}
\end{remark}

\subsection{Method}

Our method extends a series of works on the asymptotic description of small data solutions for dispersive and related equations \cite{GeMaSh,HPTV,IoPaWKG,IoPaEKG,KatoPusateri} which couple energy estimates with a refined scattering analysis in a weaker norm, here
\begin{equation}\label{DefZNorm}
\begin{split}
\Vert \mu\Vert_Z&:=\Vert \mu\Vert_{L^\infty_vL^2_x},
\end{split}
\end{equation}
which is associated to conservation laws of the resonant/asymptotic system. In particular, one can observe that this norm is invariant both for the free streaming and for the modified scattering flow. Our analysis then proceeds over a few steps.

We obtain refined dispersive estimates involving the (minimal) $Z$-norm which will have to be uniformly bounded along the evolution. This is done in Lemma \ref{DispLemma} below and can be thought as an analog of similar estimates for the Schr\"odinger evolution. As for the Hartree equation, the critical step is however to bound a quartic expression which for us appears in Subsection \ref{EEII}.

It turns out that for \eqref{VP}, it is relatively easy to obtain a uniform bound on the $Z$-norm and convergence of the {\it scattering mass}
\begin{equation}\label{ScatteringMass}
m_t(v):=\Vert\mu(t)\Vert_Z^2=\int_{\mathbb{R}^3}\mu^2(x,v,t)dx\to m_\infty(v).
\end{equation}

The scattering mass controls how much mass is ``seen'' by a frame advected by free streaming and controls the asymptotic dynamics. In particular it allows to define the effective electric field and characteristics
\begin{equation}
\begin{split}
\frac{dX}{dt}=V,\qquad\frac{dV}{dt}=-\frac{q}{t^2}\widetilde{E}(V),\qquad\widetilde{E}(\zeta):=\nabla_\zeta(-\Delta_\zeta^{-1})m_\infty,
\end{split}
\end{equation}
from which modified scattering follows.

Commuting \eqref{VP} with the corresponding operator, one sees in \eqref{CommEq} that the energy estimates separate into simple estimates which do not require very sharp bounds and are less related to decay on the one hand (Subsection \ref{EEI}) and energy estimates for the velocity regularity which require sharp control on the unknowns and provide the ``fuel'' for the decay (which is obtained by trading regularity in $v$) in Subsection \ref{EEII}.

This paper is organized as follows: in Section \ref{Notations} we introduce our notations and some estimates to control the electric field. The global existence part of Theorem \ref{MainTheorem} is proved through a bootstrap in Section \ref{NAI}. Finally in Section \ref{NAII}, we obtain the modified scattering.

\section{Notations and preliminary estimates}\label{Notations}

In the following, since all our functions are evaluated at $t$, we have suppressed the explicit dependence in $t$ fo the functions $\gamma,\phi$. In order to be more thrifty in $v$ derivatives, we will introduce Littlewood-Paley projectors for $v$-regularity: for $\varphi$ a typical Littlewood-Paley bump function and $C\in 2^{\mathbb{Z}}$ a dyadic integer, we define
\begin{equation}\label{LPv}
\begin{split}
 P_C^v=\mathcal{F}^{-1}_{\theta\to v} \varphi(C^{-1}\theta)\mathcal{F}_{v\to\theta}
\end{split}
\end{equation}
Using Bernstein's inequality $\Vert h_M\Vert_{L^2_xL^\infty_v}\lesssim M^{\frac{3}{2}}\Vert h_M\Vert_{L^2_{x,v}}$, we can observe that the $Z$-norm defined in \eqref{DefZNorm} is bounded by the energy:
\begin{equation}\label{InterpolationZNorm}
\begin{split}
\norm{h}_Z&\lesssim \norm{h}_{L^2_xL^\infty_v}\lesssim \Vert h \Vert_{L^2_xH^2_v},\\
\Vert \nabla_v h_C\Vert_Z&\lesssim \min\{C\Vert h\Vert_Z,C^{-\frac{1}{2}}\Vert h\Vert_{L^2_xH^3_v}\},\qquad\Vert \nabla_v h \Vert_Z\lesssim \Vert h\Vert_{L^2_xH^3_v}^\frac{2}{3}\Vert h\Vert_Z^\frac{1}{3},
\end{split}
\end{equation}
where we have written $h_C=P_C^vh$ with notations from \eqref{LPv}. Besides, using that
\begin{equation*}\label{GainDifferenceLoc}
\begin{split}
h_M(a,\frac{x-a}{t})-h_M(a,\frac{x}{t})=\int_{\theta=0}^1\frac{a^j}{t}\partial_{v^j}h_M(a,\frac{x}{t}-\theta\frac{a}{t})d\theta
 \end{split}
 \end{equation*}
  we obtain after summing over $M$,
\begin{equation}\label{GainDifference}
\begin{split}
\Vert h(a,\frac{x-a}{t})-h(a,\frac{x}{t})\Vert_{L^2_{a}L^\infty_x}&\lesssim\sum_M\min\{t^{-1}M^\frac{5}{2}\Vert x \gamma\Vert_{L^2_{x,v}},M^{-\frac{1}{2}}\Vert \gamma\Vert_{L^2_xH^2_v}\}\\
&\lesssim t^{-\frac{1}{6}}\left\{\Vert h\Vert_{L^2_xH^2_v}+\Vert xh\Vert_{L^2_{x,v}}\right\}.
\end{split}
\end{equation}
Because of bounds like \eqref{GainDifference}, it turns out that our estimates are more simply stated using a variation of the $Z$-norm:
\begin{equation}\label{ZPrimeNorm}
\Vert f\Vert_{Z^\prime}:=\Vert f\Vert_Z+\langle t\rangle^{-\frac{1}{100}}\left\{\Vert f\Vert_{H^2_{x,v}}+\Vert xf\Vert_{L^2_{x,v}}\right\}.
\end{equation}

For the proof of our multilinear estimates we will use the representation
\begin{equation}\label{DecR}
\begin{split}
\frac{1}{\vert x\vert^p}&=c_p(\chi)\int_{R=0}^\infty R^{-p}\chi(R^{-1}\vert x\vert)\frac{dR}{R},
\end{split}
\end{equation}
valid for all $\chi\in \mathcal{S}$ and $p>0$.

\subsection{Dispersive estimates}
We now study the decay properties of a particle density distribution $h$ under the linear flow of \eqref{VP}.

\begin{lemma}\label{DispLemma}
For any $x\in \mathbb{R}^3$, there holds that
\begin{equation}
\begin{split}
0\le \rho_{[h]}(x,t):=\int_{\mathbb{R}^3} h^2(x-tv,v)dv&\lesssim \ip{t}^{-3}\norm{h}_{Z^\prime}^2,
\end{split}
\end{equation}
and 
\begin{equation}\label{BoundElectricFieldSharp}
\begin{split}
\Vert \nabla_x\phi_{[h]}(x,t)\Vert_{L^\infty_x}&\lesssim \ip{t}^{-2}[\norm{h}_{Z^\prime}^2+\norm{h}_{L^2_{x,v}}^2],\qquad -\Delta\phi_{[h]}=\rho_{[h]}.
\end{split}
\end{equation}
Moreover,
\begin{align}
 \norm{\partial_x^\alpha\nabla_x\phi_{[h]}(x,t)}_{L^\infty_x}&\lesssim \ip{t}^{-2-\abs{\alpha}}\norm{h}_{L^2_xH^{\abs{\alpha}+1}_v}^2,
 \quad &\abs{\alpha}\leq 2,\label{eq:DispLemmaLinf}\\
 \norm{\partial_x^\alpha\nabla_x\phi_{[h]}(x,t)}_{L^2_x}&\lesssim \ip{t}^{-\frac{1}{2}-\abs{\alpha}}\norm{h}_{L^2_xH^{\vert\alpha\vert}_v}^2, \quad &\abs{\alpha}\ge 2. \label{eq:DispLemmaL2}
\end{align} 

\end{lemma}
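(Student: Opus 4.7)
The strategy is to exploit the dispersion encoded in $\rho_{[h]}$ via the change of variables $w = x - tv$, which converts the free-streaming integral into
\[
\rho_{[h]}(x,t) = t^{-3}\int_{\mathbb{R}^3} h^2(w,(x-w)/t)\,dw,
\]
exhibiting the $\langle t\rangle^{-3}$ decay explicitly. Non-negativity of $\rho_{[h]}$ is immediate. For the sharp pointwise bound, I would split $h(w,(x-w)/t) = h(w,x/t) + [h(w,(x-w)/t) - h(w,x/t)]$: the principal piece contributes $\int h^2(w,x/t)\,dw \le \|h\|_Z^2$ since $\|h(\cdot,x/t)\|_{L^2_x} \le \|h\|_{L^\infty_v L^2_x}$, while the cross and error terms are controlled by Cauchy--Schwarz in $w$ together with \eqref{GainDifference}, which supplies an extra negative power of $t$ --- ample to absorb the $\langle t\rangle^{1/50}$ cost implicit in the $Z'$ norm. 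For bounded $t$, a direct Sobolev embedding suffices.

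For \eqref{BoundElectricFieldSharp}, I would write $\nabla_x\phi_{[h]}$ through the Newtonian kernel and use \eqref{DecR} to cut $|x-y|^{-2}$ at a scale $R$, bounding the near part by $R\,\|\rho_{[h]}\|_{L^\infty_x}$ and the far part by $R^{-2}\|\rho_{[h]}\|_{L^1_x}$. The identity $\|\rho_{[h]}\|_{L^1_x} = \|h\|_{L^2_{x,v}}^2$ follows from undoing the change of variable (the non-negativity is essential here to avoid absolute values). Combined with the $L^\infty$ bound from the first step and optimization in $R$, this produces $\langle t\rangle^{-2}\|h\|_{L^2_{x,v}}^{2/3}\|h\|_{Z'}^{4/3}$, which Young's inequality converts to the claimed sum.

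For the higher-derivative bounds \eqref{eq:DispLemmaLinf} and \eqref{eq:DispLemmaL2}, the key observation is that pulling $\partial_{x^j}$ inside the $w$-integral acts as $t^{-1}\partial_{v^j}$, so by induction
\[
\partial_x^\alpha\rho_{[h]}(x,t) = t^{-3-|\alpha|}\int \bigl(\partial_v^\alpha[h^2]\bigr)(w,(x-w)/t)\,dw,
\]
a bilinear expression in $\partial_v^{\beta_1}h$ and $\partial_v^{\beta_2}h$ with $|\beta_1|+|\beta_2|=|\alpha|$, carrying the extra factor $t^{-|\alpha|}$. Rerunning the first two steps on this representation --- using Bernstein in $v$ via the projectors \eqref{LPv} to pass from pointwise-in-$v$ control to $L^2_x H^s_v$ bounds, and the change of variable to compute the corresponding $L^1_x$ norms --- yields $L^\infty_x$ and $L^1_x$ estimates on $\partial_x^\alpha\rho_{[h]}$, whence the $R$-split optimization gives \eqref{eq:DispLemmaLinf}. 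For \eqref{eq:DispLemmaL2} with $|\alpha|\ge 2$, I would move to the Fourier side: $\|\partial_x^\alpha\nabla_x\phi_{[h]}\|_{L^2_x} \sim \||\nabla_x|^{|\alpha|-1}\rho_{[h]}\|_{L^2_x}$, which one estimates by Plancherel after interpolating between $\|\rho_{[h]}\|_{L^1_x}$ and a pointwise bound on a lower-order derivative of $\rho_{[h]}$.

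The main technical hurdle is the bookkeeping of $v$-derivatives: a naive $L^\infty_v\hookleftarrow H^2_v$ embedding would lose one derivative relative to the $H^{|\alpha|+1}_v$ count stated in the lemma, so one must distribute derivatives carefully using the interpolation \eqref{InterpolationZNorm} combined with Littlewood--Paley dyadic decomposition in $v$ (with Bernstein at each scale) to recover the sharp regularity.
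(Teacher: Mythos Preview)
Your treatment of the pointwise bound on $\rho_{[h]}$ and of \eqref{BoundElectricFieldSharp} is correct and matches the paper: same change of variables, same principal/remainder split controlled by \eqref{GainDifference}, and your near/far cut is just a repackaging of the paper's continuous decomposition $\nabla\phi=\int_0^\infty\Phi_R\,dR/R^2$.

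The genuine gap is in \eqref{eq:DispLemmaLinf}. Your plan bounds $\|\partial_x^\alpha\rho_{[h]}\|_{L^\infty_x}$ and $\|\partial_x^\alpha\rho_{[h]}\|_{L^1_x}$ separately and then optimizes in $R$. But the $L^\infty_x$ bound forces the highest-derivative factor $\partial_v^{\beta_2}h$ (with $|\beta_2|=|\alpha|$) into $L^\infty_vL^2_x$, and the embedding $H^{3/2+}_v\hookrightarrow L^\infty_v$ cannot be sharpened by Littlewood--Paley: the dyadic sum $\sum_M\|\partial_v^{\beta_2}h_M\|_{L^\infty_vL^2_x}$ still costs $\|h\|_{L^2_xH^{|\beta_2|+2}_v}$, regardless of how you redistribute frequencies. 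Your optimization therefore yields at best
\[
\langle t\rangle^{-2-|\alpha|}\,\|h\|_{L^2_xH^{|\alpha|+2}_v}^{4/3}\,\|h\|_{L^2_xH^{|\alpha|}_v}^{2/3},
\]
which by log-convexity of Sobolev norms is controlled only by $\|h\|_{L^2_xH^{|\alpha|+4/3}_v}^2$ --- one third of a derivative short of the claim. This matters: for $|\alpha|=2$ you would need $H^{10/3}_v$, while the bootstrap \eqref{BootstrapAss} only carries $H^3_v$.

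The paper's fix is \emph{not} to decouple kernel and density. Keeping the convolution together after the change of variables, one applies H\"older in the integration variable $z$ with the kernel $\nabla_z\{\varphi(R^{-1}|z|)\}$ placed in $L^{6/5}_z$ (norm $\sim R^{3/2}$) and one $h$-factor in $L^6_z$. Since $(x-z-a)/t$ plays the role of $v$, the Sobolev embedding $H^1_v\hookrightarrow L^6_v$ then costs only \emph{one} extra $v$-derivative on that factor, giving $|\Phi_R^{\alpha_1,\alpha_2}|\lesssim R^{3/2}t^{1/2}\|h\|_{L^2_xH^{|\alpha_1|+2}_v}\|h\|_{L^2_xH^{|\alpha_2|+1}_v}$ and, after the $R$-integration, the sharp $\|h\|_{L^2_xH^{|\alpha|+1}_v}^2$. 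The same theme --- H\"older and Sobolev at intermediate exponents rather than the $L^1$/$L^\infty$ endpoints --- is what drives the paper's proof of \eqref{eq:DispLemmaL2}; your Plancherel sketch interpolating $\|\rho\|_{L^1}$ against a pointwise bound on a lower derivative would run into the analogous derivative loss.
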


\begin{proof}[Proof of Lemma \ref{DispLemma}]
Assume without loss of generality that $t\geq 1$. We change variables and rewrite
\begin{equation}\label{DecRho}
\begin{split}
\rho_{[h]}(x,t)&=\int_{\mathbb{R}^3} h(x-tv,v)^2dv=t^{-3}\int_{\mathbb{R}^3}h^2(a,\frac{x-a}{t})da=t^{-3}\int_{\mathbb{R}^3}h^2(a,\frac{x}{t})da+J(x,t),\\
J(x,t)&:=t^{-3}\int_{\mathbb{R}^3}\left\{h^2(a,\frac{x-a}{t})-h^2(a,\frac{x}{t})\right\}da.
\end{split}
\end{equation}
The principal part can be directly bounded in terms of the $Z$ norm,
\begin{equation}
 \int_{\mathbb{R}^3}h^2(a,\frac{x}{t})da\leq\norm{h}_Z^2,
\end{equation}
so it suffices to show the decay of $J$. To this end we observe that
\begin{equation}\label{JLowerOrder}
\begin{split}
\vert J(x,t)\vert &\le t^{-3}\Vert h\Vert_{L^2_xL^\infty_v}\Vert h(a,\frac{x-a}{t})-h(a,\frac{x}{t})\Vert_{L^2_{a}}\lesssim t^{-3-\frac{1}{10}}\norm{h}_{Z'}^2,
\end{split}
\end{equation}
where in the last inequality we used \eqref{GainDifference}.

\medskip

The second inequality uses similar ideas. Using \eqref{DecR}, we can decompose
\begin{equation}\label{IntegralElectricField}
\begin{split}
\nabla\phi_{[h]}(x,t)&=\frac{1}{4\pi}\int_{R=0}^\infty\frac{dR}{R^2}\iint_{\mathbb{R}^3_{y,u}}\nabla_y\left\{\varphi(R^{-1}\vert x-y\vert)\right\}h^2(y-tu,u)dydu=:\int_{R=0}^\infty\Phi_R(x,t)\frac{dR}{R^2}.
\end{split}
\end{equation}
On the one hand, we see that
\begin{equation}\label{eq:disp_hifreq}
\begin{split}
\vert \partial_x^\alpha\Phi_R(x,t)\vert&\lesssim R^{-1-\vert\alpha\vert}\iint_{\mathbb{R}^3_{y,u}}h^2(y-tu,u)dydu\lesssim R^{-1}\Vert h\Vert_{L^2_{x,v}}^2,
\end{split}
\end{equation}
which is good enough for large $R\ge t$. On the other hand, for small $R$ we have
\begin{equation}\label{eq:disp_lowfreq}
\begin{split}
 \Phi_R(x,t) &= \iint_{\mathbb{R}^3_{z,u}}\nabla_z\left\{\varphi(R^{-1}\vert z\vert)\right\}h^2(x-z-tu,u)dzdu\\
 &= t^{-3}\iint_{\mathbb{R}^3_{z,a}}\nabla_z\left\{\varphi(R^{-1}\vert z\vert)\right\}h^2(a,\frac{x-z-a}{t})dzda.
\end{split}
\end{equation}
Taking derivatives, we see that
\begin{equation}\label{AddedBoundMonday1}
\begin{split}
 \partial_x^\alpha\Phi_R(x,t)&= t^{-3-\vert\alpha\vert}\sum_{\abs{\alpha_1}+\abs{\alpha_2}=\vert\alpha\vert,\,\vert\alpha_1\vert\le\vert\alpha_2\vert}c_{\alpha_1,\alpha_2}\Phi_R^{\alpha_1,\alpha_2}(x,t),\\
\Phi_R^{\alpha_1,\alpha_2}(x,t)&:=\iint_{\mathbb{R}^3_{z,a}}\nabla_z\left\{\varphi(R^{-1}\vert z\vert)\right\}\partial_v^{\alpha_1}h(a,\frac{x-z-a}{t})\partial_v^{\alpha_2}h(a,\frac{x-z-a}{t})dzda.
 \end{split}
\end{equation}
If $\vert\alpha\vert=0$, we estimate, with \eqref{GainDifference},
\begin{equation*}
\begin{split}
\vert \Phi_R^{0,0}\vert&\lesssim \Vert \nabla_z\varphi(R^{-1}\vert z\vert)\Vert_{L^1_z}  \Vert h(a,\frac{x-z-a}{t})\Vert_{L^\infty_{z}L^2_a}\Vert h(a,\frac{x-z-a}{t})\Vert_{L^\infty_zL^2_a}\lesssim R^2\Vert h\Vert_{Z^\prime}^2,
\end{split}
\end{equation*}
while if $\vert\alpha\vert\ge 1$, using H\"older's inequality, we find that
\begin{equation*}
\begin{split}
\vert \Phi_R^{\alpha_1,\alpha_2}\vert&\lesssim \Vert \nabla_z\varphi(R^{-1}\vert z\vert)\Vert_{L^\frac{6}{5}_z}  \Vert \partial^{\alpha_1}_vh(a,\frac{x-z-a}{t})\Vert_{L^\infty_{z}L^2_a}\Vert\partial_v^{\alpha_2}h(a,\frac{x-z-a}{t})\Vert_{L^6_zL^2_a}\\
&\lesssim R^\frac{3}{2}t^\frac{1}{2}\Vert h\Vert_{L^2_xH^{\vert\alpha_1\vert+2}_v}\Vert h\Vert_{L^2_xH^{\vert\alpha_2\vert+1}_v}.
\end{split}
\end{equation*}
Integrating the above bounds for $R\le t$ and \eqref{eq:disp_hifreq} for $R\ge t$ in \eqref{IntegralElectricField}, we obtain \eqref{BoundElectricFieldSharp} and \eqref{eq:DispLemmaLinf}. Finally, for $\abs{\alpha}\ge 2$, we estimate in $L^2$. For $R\geq t$ it suffices to note that
\begin{equation}
\begin{aligned}
 \norm{\partial_x^\alpha\Phi_R(x,t)}_{L^2_x}&\lesssim \norm{\iint_{\mathbb{R}^3_{y,u}}\partial_x^\alpha\nabla_y\left\{\varphi(R^{-1}\vert x-y\vert)\right\}h^2(y-tu,u)dydu}_{L^2_x}\lesssim R^{-\vert\alpha\vert +\frac{1}{2}}\norm{h}_{L^2_{x,v}}^2.
\end{aligned} 
\end{equation}
For $t\leq R$ we use H\"older and Sobolev inequalities in \eqref{AddedBoundMonday1} to find (for $2/p_j=\vert\alpha_j\vert/\vert\alpha\vert$) that
\begin{equation*}
\begin{split}
 \abs{\Phi_R^{\alpha_1,\alpha_2}(x,t)}&\lesssim t^{-\vert\alpha\vert-3}\Vert \nabla_z\left\{\varphi(R^{-1}\vert z\vert)\right\}\Vert_{L^2_z}\Vert \partial_v^{\alpha_1}h(a,\frac{z}{t})\Vert_{L^2_aL^{p_1}_z}\Vert\partial_v^{\alpha_2}h(a,\frac{z}{t})\Vert_{L^2_aL^{p_2}_z} \lesssim t^{-\vert\alpha\vert-\frac{3}{2}}R^{\frac{1}{2}}\norm{h}_{L^2_xH^{\vert\alpha\vert}_v}^2,
 \end{split}
\end{equation*}
while estimating in $L^1_x$ yields
\begin{equation}
 \norm{\partial_x^\alpha\Phi_R(x,t)}_{L^1_x}\lesssim t^{-\vert\alpha\vert}\norm{\nabla_z\left\{\varphi(R^{-1}\vert z\vert)\right\}}_{L^1_x}\norm{h}_{L^2_xH^{\vert\alpha\vert}_v}^2\lesssim t^{-\vert\alpha\vert}R^2\norm{h}_{L^2_xH^{\vert\alpha\vert}_v}^2.
\end{equation}
By interpolation it thus follows that
\begin{equation}
 \norm{\partial_x^\alpha\Phi_R(x,t)}_{L^2_x}\lesssim t^{-\vert\alpha\vert-\frac{3}{4}}R^\frac{5}{4}\norm{h}_{L^2_xH^{\vert\alpha\vert}_v}^2,
\end{equation}
and hence
\begin{equation}
 \norm{\partial_x^\alpha\nabla_x\phi(t,x)}_{L^2}\lesssim \int_0^\infty \min\{t^{-\vert\alpha\vert-\frac{3}{4}}R^\frac{5}{4},R^{-\vert\alpha\vert+\frac{1}{2}}\} \frac{dR}{R^2}\cdot\norm{h}_{L^2_xH^{\vert\alpha\vert}_v}^2\lesssim t^{-\vert\alpha\vert-\frac{1}{2}}\norm{h}_{L^2_xH^{\vert\alpha\vert}_v}^2.
\end{equation}

\end{proof}

\begin{remark}
It is important to have a sharp control on the Electric field as in \eqref{BoundElectricFieldSharp}. The first bound on $\rho$ is here essentially for motivation to help clarify the relationship with the Schr\"odinger equation. The decomposition \eqref{DecRho} with \eqref{JLowerOrder} is one of the main motivations for the definition of the $Z$-norm. It can be compared to Fraunhoffer's inequality for the Schr\"odinger flow:
\begin{equation*}
\begin{split}
\left(e^{-it\Delta}f\right)(x)=\frac{e^{-i\frac{\vert x\vert^2}{4t}}}{(2\pi it)^\frac{d}{2}}\widehat{f}(-\frac{x}{2t})+O_{L^\infty}(t^{-\frac{d}{2}-\frac{1}{4}})
\end{split}
\end{equation*}
valid whenever $f\in\mathcal{S}$, which leads to the natural definition $\Vert f\Vert_Z=\Vert\widehat{f}\Vert_{L^\infty}$ for NLS, see \cite{GeMaSh,KatoPusateri}.
\end{remark}

\section{Nonlinear analysis I: Bootstrap of the norms}\label{NAI}
We first integrate the linear flow and define
\begin{equation}
\begin{split}
\gamma(x,v,t)&:=\mu(x+tv,v,t),\qquad \rho(x,t):=\int_{\mathbb{R}^3}\mu^2(x,v,t)dv=\int_{\mathbb{R}^3} \gamma^2(x-tv,v,t)dv,
\end{split}
\end{equation}
and we obtain the new equation
\begin{equation}\label{VPNew}
\begin{aligned}
\partial_t\gamma(x,v)&=q\nabla_x\phi(x+tv)\cdot\left\{\nabla_v-t\nabla_x\right\}\gamma(x,v),\qquad
-\Delta_x\phi=\rho.
\end{aligned}
\end{equation}

We can now state our main bootstrap proposition from which global existence and boundedness easily follow.

\begin{proposition}\label{prop:sol}
Let $\delta>0$ be a fixed small constant. There exists $\varepsilon^\ast>0$ such that for all $0<\varepsilon_0\le \varepsilon\le\varepsilon^\ast$, the following holds. Assume that $\gamma$ solves \eqref{VPNew} on $0\le t\le T$, with initial data $\mu_0$ satisfying \eqref{InitialData} and obeys the bound
\begin{equation}\label{BootstrapAss}
\begin{split}
\Vert \gamma\Vert_{L^2_{x,v}}&\le \varepsilon,\\
\Vert \gamma\Vert_{Z}+\Vert \gamma\Vert_{L^2_vH^3_x}&\le\varepsilon,\\
\Vert  \gamma\Vert_{L^2_xH^3_{v}}+\Vert  x\gamma\Vert_{L^2_{x,v}}&\le\varepsilon\langle t\rangle^{\delta},
\end{split}
\end{equation}
then there holds that
\begin{equation}\label{BootstrapAss1}
\begin{alignedat}{2}
\Vert\nabla_x\phi\Vert_{L^\infty_x}&\le C_1 \ip{t}^{-2}\varepsilon^2,\\
\Vert \partial_x^\alpha \nabla_x\phi\Vert_{L^\infty_x}&\le C_1\ip{t}^{-2-\abs{\alpha}+2\delta}\varepsilon^2,\qquad &1\leq\abs{\alpha}\leq 2,\\
\Vert \partial_x^\alpha \nabla_x\phi\Vert_{L^2_x}&\le C_1 \ip{t}^{-\frac{1}{2}-\abs{\alpha}+2\delta}\varepsilon^2,\qquad &2\leq\abs{\alpha}\leq 3,\\
\end{alignedat}
\end{equation}
and
\begin{equation}\label{BootstrapCcl2}
\begin{split}
\Vert \gamma\Vert_{L^2_{x,v}}&\le \varepsilon_0,\\
\Vert \gamma\Vert_{Z}+\norm{\nabla_x\gamma}_Z+\Vert \gamma\Vert_{L^2_vH^3_x}&\le\varepsilon_0+C_2\varepsilon^3,\\
\norm{\gamma}_{L^2_xH^1_v}+\Vert  x\gamma\Vert_{L^2_{x,v}}&\le\varepsilon_0+C_2\varepsilon^3\ln \langle t\rangle,\\
\norm{\gamma}_{L^2_xH^{\vert\alpha\vert}_v}&\leq\varepsilon_0+C_2\varepsilon^\frac{5}{2}(\ln \langle t\rangle)^{\vert\alpha\vert}\cdot(\ln\langle\ln\langle t\rangle\rangle)^{2\vert\alpha\vert},\qquad 2\le \vert\alpha\vert\le 3\\
\end{split}
\end{equation}
for some universal constant $C_1,C_2=C_2(\delta)$. In addition, the scattering mass defined in \eqref{ScatteringMass} converges uniformly to a limit $m_\infty(v)\in L^1_v\cap L^\infty_v$.
\begin{equation}\label{ConvergenceScatteringMass}
 \Vert m_t-m_\infty\Vert_{L^\infty_v}\lesssim \varepsilon^3\langle t\rangle^{-\frac{1}{2}}.
\end{equation}

\end{proposition}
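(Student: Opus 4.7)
The plan is to address the three groups of conclusions separately: the dispersive bounds on the electric field, the energy bootstrap of $\gamma$, and the Cauchy property of the scattering mass. The electric field estimates \eqref{BootstrapAss1} follow directly from Lemma \ref{DispLemma} applied to $h=\gamma$: choosing $\delta<1/100$, the bootstrap assumptions \eqref{BootstrapAss} yield $\|\gamma\|_{Z'}\lesssim\varepsilon$, and then \eqref{BoundElectricFieldSharp}, \eqref{eq:DispLemmaLinf} and \eqref{eq:DispLemmaL2} translate the Sobolev control of $\gamma$ into the claimed pointwise and $L^2_x$ decay rates, with a harmless loss $\langle t\rangle^{2\delta}$ for the derivatives. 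The $L^2_{x,v}$-norm is exactly conserved along \eqref{VPNew}, since the transport vector field $(-tE(x+tv),E(x+tv))$ is divergence-free in $(x,v)$; this gives $\|\gamma(t)\|_{L^2_{x,v}}=\|\mu_0\|_{L^2_{x,v}}\le\varepsilon_0$.

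For the $Z$-norm and its $\nabla_x$-derivatives I plan to compute $\frac{d}{dt}\|\gamma(\cdot,v,t)\|_{L^2_x}^2$ at fixed $v$ directly from \eqref{VPNew}. Integration by parts in $x$ turns the $-qt\nabla_x\gamma$ contribution into $-qt\int\rho(x+tv,t)\gamma^2(x,v,t)\,dx$, bounded uniformly in $v$ by $t\|\rho\|_{L^\infty}\|\gamma\|_Z^2\lesssim\langle t\rangle^{-2}\varepsilon^4$, while the remaining $\nabla_v\gamma$ contribution is controlled by $\|\nabla_x\phi\|_{L^\infty}\|\gamma\|_Z\|\nabla_v\gamma\|_Z$ and closed using the interpolation in \eqref{InterpolationZNorm} to trade the high-$v$-regularity bootstrap against a small polynomial loss that still leaves the integrand integrable. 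The same argument applied to $\nabla_x^k\gamma$, $1\le k\le 3$, produces only extra commutator factors $\partial_x^\ell\nabla_xE$ whose faster decay from \eqref{eq:DispLemmaLinf} keeps the integrand integrable, proving the second line of \eqref{BootstrapCcl2}.

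The main obstacle is the bootstrap of the $v$-regularity and the position weight. Commuting \eqref{VPNew} with $\nabla_v$ gives
\begin{equation*}
\partial_t\nabla_v\gamma=qE(x+tv)\cdot(\nabla_v-t\nabla_x)\nabla_v\gamma+qt\nabla_xE(x+tv)\cdot(\nabla_v-t\nabla_x)\gamma,
\end{equation*}
and analogous identities hold for $x\gamma$ (with commutator $-qtE(x+tv)\gamma$) and for higher $\nabla_v^k$. The transport part contributes zero to $\frac{d}{dt}\|\cdot\|_{L^2_{x,v}}^2$ by the same Hamiltonian cancellation as for the $L^2_{x,v}$ conservation, so everything reduces to controlling the quartic forcing term. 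Its dangerous piece is the combination $t^{k+1}\nabla_x^kE\cdot\nabla_x\gamma$, for which the naive $L^\infty_x$ bound would give only polynomial growth. The remedy is to pair $\nabla_x^kE$ in $L^2_x$ via \eqref{eq:DispLemmaL2} with Bernstein-type control of $\nabla_x\gamma$, and to trade high $v$-regularity for spatial decay via \eqref{InterpolationZNorm}, reducing the problem to a logarithmically divergent integral. Iterating inductively on $|\alpha|\le 3$ and reinvesting the improved bounds obtained at each level is what produces the $\ln\langle t\rangle$ growth for $|\alpha|=1$ and the $(\ln\langle t\rangle)^{|\alpha|}(\ln\langle\ln\langle t\rangle\rangle)^{2|\alpha|}$ growth for $|\alpha|=2,3$, with the double logarithm arising from a more delicate book-keeping at the top regularity level.

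Finally, for \eqref{ConvergenceScatteringMass}, since $m_t(v)=\|\gamma(\cdot,-v,t)\|_{L^2_x}^2$, the same computation as for the $Z$-norm yields
\begin{equation*}
\tfrac{d}{dt}m_t(v)=2q\int\gamma(x,-v,t)E(x-tv,t)\cdot(\nabla_v\gamma)(x,-v,t)\,dx-qt\int\rho(x-tv,t)\gamma^2(x,-v,t)\,dx.
\end{equation*}
Both integrals decay at an integrable rate uniformly in $v$ by \eqref{BootstrapAss1} and \eqref{InterpolationZNorm}, so $m_t$ is Cauchy in $L^\infty_v$ at the claimed rate $\langle t\rangle^{-1/2}$; the bounds $m_t(v)\le\|\gamma\|_Z^2\le\varepsilon^2$ and $\int m_t\,dv=\|\gamma\|_{L^2_{x,v}}^2\le\varepsilon_0^2$ pass to the limit to give $m_\infty\in L^1_v\cap L^\infty_v$.
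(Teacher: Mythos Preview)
Your overall structure is right: the dispersive bounds \eqref{BootstrapAss1} from Lemma~\ref{DispLemma}, the $L^2_{x,v}$ conservation via the divergence-free structure \eqref{DivergenceFree}, the $Z$-norm computation, and the Cauchy estimate on $m_t$ all match the paper's argument closely.

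The genuine gap is in the velocity regularity, where your proposed mechanism does not close. The dangerous term in $\tfrac{d}{dt}\|\partial_v^\alpha\gamma\|_{L^2}^2$ is the quartic expression \eqref{MainIntegralTerm}, schematically $t^{|\alpha|+1}\!\iint\nabla_x^{|\alpha|+1}\phi\cdot\nabla_x\gamma\cdot\partial_v^\alpha\gamma$. Your plan is to estimate $\nabla_x^{|\alpha|+1}\phi$ in $L^2_x$ via \eqref{eq:DispLemmaL2}; but that bound reads $\|\partial_x^\alpha\nabla_x\phi\|_{L^2}\lesssim t^{-1/2-|\alpha|}\|\gamma\|_{L^2_xH^{|\alpha|}_v}^2$, and the right-hand side already carries the $t^{2\delta}$ loss from the bootstrap. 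Feeding this back gives $\tfrac{d}{dt}\|\partial_v^\alpha\gamma\|_{L^2}\lesssim\varepsilon^3 t^{-1+2\delta}$, hence growth like $t^{2\delta}$, which is strictly worse than the assumed $t^\delta$ and the bootstrap does not close. The naive $L^\infty$ bound on $\nabla_x^{|\alpha|+1}\phi$ from \eqref{BootstrapAss1} has exactly the same defect. Bernstein and \eqref{InterpolationZNorm} by themselves do not break this circularity, because every available bound on derivatives of $\phi$ inherits the $v$-regularity loss.

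The paper breaks the loop by \emph{not} estimating $\nabla^2\phi$ separately. It returns to the full quartic form and uses that one of the four factors can always be placed in the uniformly bounded $Z'$-norm rather than in an $H^k_v$-norm. For $|\alpha|=1$ this is done in one stroke: writing $\partial_i\partial_j(-\Delta)^{-1}=R_iR_j$ and changing variables $(y,u)\mapsto(a,\tfrac{y-a}{t})$, $(x,v)\mapsto(b,\tfrac{x-b}{t})$ yields $|I_{\alpha,0}|\lesssim t^{-1}\|\gamma\|_{Z'}\|\nabla_x\gamma\|_{Z'}\|\gamma\|_{L^2}\|\nabla_v\gamma\|_{L^2}$, a clean $t^{-1}$ with no $\delta$-loss, which integrates to the sharp $\ln t$. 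For $|\alpha|\ge 2$ the paper uses the kernel representation \eqref{DecR}, integrates by parts to move $\partial_y^{\beta_1}$ from the kernel onto the $\gamma^2$ factor, and proves three separate bounds \eqref{ElementaryBounds} on $I_R^{\beta_1,\beta_2}$ optimized for $R\ge t$, $R\le t/(\ln t)^{100}$, and the intermediate range; the double-logarithm comes from Littlewood-Paley bookkeeping in the middle range when $|\alpha|=3$. None of this is visible in your sketch, and it is precisely this quartic manipulation that makes the estimate work.
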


Lemma \ref{DispLemma} gives \eqref{BootstrapAss1}. The first norm in \eqref{BootstrapCcl2} is trivially controlled by conservation laws (see \eqref{DivergenceFree} below). We give the bounds for the $Z$ norms in Subsection \ref{ZNorm}. Subsections \ref{EEI} and \ref{EEII} then demonstrates the energy estimates, including the most delicate terms: derivatives in velocity.

\begin{remark}[Regarding the growth rates of derivatives in $v$]

We have made some effort to obtain almost sharp bounds in the $v$ derivative; a slightly simpler analysis would have allowed to propagate slow polynomial growth. For one $v$ derivative, \eqref{BootstrapCcl2} gives the sharp growth rate $\ln(t)$, whereas for two or three derivatives there is an additional factor of $(\ln(\ln(t)))^p$. An inspection of the proof shows that if one were to propagate higher regularity in $v$, this loss could be relegated to higher orders of derivatives in $v$. We hope that the fact that we can obtain relatively simply optimal upper bounds for the first derivative illustrates the strength of our method.
\end{remark}

\subsection{Propagation of the $Z$-norm}\label{ZNorm}\label{NAZ}

We can directly compute that, for fixed $v\in\mathbb{R}^3$,
\begin{equation}
\begin{split}
\frac{1}{2}\frac{d}{dt}\int \gamma^2 dx&=q\int_{\mathbb{R}^3}\nabla_x\phi(x+tv)\gamma(x,v)\nabla_v\gamma(x,v) dx+q\frac{t}{2}\int_{\mathbb{R}^3}\Delta_x\phi(x+tv)\gamma^2(x,v)dx
\end{split}
\end{equation}
from which we deduce that
\begin{equation}
\begin{split}
\vert\Vert \gamma(t)\Vert_{Z}^2-\Vert\gamma_0\Vert_Z^2\vert\lesssim \int_{s=0}^t\Vert\nabla_x\phi(s)\Vert_{L^\infty_x}\Vert \gamma(s)\Vert_Z\Vert\nabla_v\gamma(s)\Vert_Z ds + \int_{s=0}^ts\Vert \rho(s)\Vert_{L^\infty}\Vert\gamma(s)\Vert_Z^2ds.
\end{split}
\end{equation}
Using \eqref{BootstrapAss1}, this leads to the bound of the $Z$ norm in \eqref{BootstrapCcl2} and to \eqref{ConvergenceScatteringMass}.

\subsubsection{Higher order $Z$-norm}

We can even control higher regularity in $Z$. Using \eqref{CommEq}, we see that
\begin{equation*}
\begin{split}
\frac{1}{2}\frac{d}{dt}\int_{\mathbb{R}^3_x}(\partial_{x^j}\gamma)^2dx&=q\int_{\mathbb{R}^3_x}\nabla_x\phi(x+tv)\partial_{x^j}\gamma\nabla_v\partial_{x^j}\gamma dx+\frac{t}{2}\int_{\mathbb{R}^3_x}\Delta_x\phi(x+tv)(\partial_{x^j}\gamma)^2dx\\
&\quad+q\int_{\mathbb{R}^3_x}\nabla_x\partial_{x^j}\phi(x+tv)\cdot\nabla_v\gamma\cdot\partial_{x^j}\gamma\,dx-qt\int_{\mathbb{R}^3_x}\nabla_x\partial_{x^j}\phi(x+tv)\cdot\nabla_x\gamma\cdot\partial_{x^j}\gamma dx.
\end{split}
\end{equation*}
All but the first terms can be controlled as before. The first term requires a little more work since it contains $2$ derivatives; however, we can still integrate the $x$-derivative by parts to move it to a more favorable position. To decide when to do it, we use the Littlewood-Paley decomposition \eqref{LPv} to decompose
\begin{equation}
 \int_{\R^3}\nabla_x\phi(x+tv)\partial_{x^j}\gamma\nabla_v\partial_{x^j}\gamma dx=\Big(\sum_{C_1> C_2}+\sum_{C_1\le C_2}\Big) \int_{\R^3}\nabla_x\phi(x+tv)\partial_{x^j}\gamma_{C_1}\nabla_v\partial_{x^j}\gamma_{C_2} dx,
\end{equation}
where on the one hand we estimate
\begin{equation}\label{EstimateSum1}
\begin{aligned}
 &\abs{\sum_{C_1>C_2}\int_{\R^3}\nabla_x\phi(x+tv)\partial_{x^j}\gamma_{C_1}\nabla_v\partial_{x^j}\gamma_{C_2} dx}\lesssim \sum_{C_1>C_2} \norm{\nabla_x\phi}_{L^\infty} \norm{\partial_{x^j}\gamma_{C_1}}_{L^\infty_vL^2_x}\norm{\nabla_v\partial_{x^j}\gamma_{C_2}}_{L^\infty_vL^2_x}\\
 &\qquad \lesssim \norm{\nabla_x\phi}_{L^\infty} \sum_{C_1>C_2} C_1^{\frac{3}{2}}\norm{\partial_{x^j}\gamma_{C_1}}_{L^2_{x,v}}C_2^{1+\frac{3}{2}}\norm{\partial_{x^j}\gamma_{C_2}}_{L^2_{x,v}}\\
 &\qquad\lesssim\norm{\nabla_x\phi}_{L^\infty} \sum_{C_1>C_2} \left(\frac{C_2}{C_1}\right)^{\frac{1}{2}}C_1^2\norm{\partial_{x^j}\gamma_{C_1}}_{L^2_{x,v}}C_2^2\norm{\partial_{x^j}\gamma_{C_2}}_{L^2_{x,v}}\lesssim\norm{\nabla_x\phi}_{L^\infty}\norm{\gamma}_{H^1_xH^2_v}^2.
\end{aligned} 
\end{equation}
To control the second sum, we integrate by parts in $x$ to get, when $C_1\le C_2$:
\begin{equation*}
\begin{split}
&\int_{\mathbb{R}^3}\nabla_x\phi(x+tv)\partial_{x^j}\gamma_{C_1}\nabla_v\partial_{x^j}\gamma_{C_2} dx\\
=&-\int_{\mathbb{R}^3}\partial_{x^j}\nabla_x\phi(x+tv)\partial_{x^j}\gamma_{C_1}\nabla_v\gamma_{C_2} dx-\int_{\mathbb{R}^3}\nabla_x\phi(x+tv)\partial_{x^j}^2\gamma_{C_1}\cdot \nabla_v\gamma_{C_2} dx.
\end{split}
\end{equation*}
The first term leads to a simple sum as before:
\begin{equation}
\begin{aligned}
 \abs{\sum_{C_1\leq C_2}\int_{\R^3}\partial_{x^j}\nabla_x\phi(x+tv)\partial_{x^j}\gamma_{C_1}\nabla_v\gamma_{C_2}dx}\lesssim \norm{\partial_{x^j}\nabla_x\phi}_{L^\infty}\norm{\gamma}_{H^1_xH^2_v}\norm{\gamma}_{L^2_xH^3_v},\\
\end{aligned}
\end{equation}
and we can sum the last term as in \eqref{EstimateSum1} to get
\begin{equation}
\begin{aligned}
 &\abs{\sum_{C_1\leq C_2}\int_{\R^3}\nabla_x\phi(x+tv)\partial_{x^j}^2\gamma_{C_1}\nabla_v\gamma_{C_2}dx}\lesssim\sum_{C_1\leq C_2}\norm{\nabla_x\phi}_{L^\infty}\norm{\partial_{x^j}^2\gamma_{C_1}}_{L^\infty_vL^2_x}\norm{\nabla_v\gamma_{C_2}}_{L^\infty_vL^2_x}\\
 &\lesssim\norm{\nabla_x\phi}_{L^\infty}\sum_{C_1\leq C_2}\left(\frac{C_1}{C_2}\right)^{\frac{1}{2}}C_1\norm{\partial_{x^j}^2\gamma_{C_1}}_{L^2_{x,v}}C_2^3\norm{\gamma_{C_2}}_{L^2_{x,v}}\lesssim \norm{\nabla_x\phi}_{L^\infty}\norm{\gamma}_{H^2_xH^1_v}\norm{\gamma}_{L^2_xH^3_v}.
\end{aligned}
\end{equation}
Combining the estimates with \eqref{BootstrapAss1}, we obtain a control of $\nabla_x\gamma$ in $Z$ as in \eqref{BootstrapCcl2}.

\subsection{Energy Estimates I: simpler energies}\label{EEI}

We can do energy estimates based on $L^2_{x,v}$ norm. Considering directly \eqref{VPNew} and deriving with respect to $x$, we find that (for $\gamma_{x^j}=\partial_{x^j}\gamma$ and $\gamma_{v^j}=\partial_{v^j}\gamma$)
\begin{equation}\label{CommEq}
\begin{split}
\partial_t\left\{x_j\gamma\right\}-q\nabla_x\phi(x+tv)\cdot\left\{\nabla_v-t\nabla_x\right\}\left\{x_j\gamma\right\}&=qt \partial_j\phi(x+tv)\cdot\gamma,\\
\partial_t\gamma_{x^j}-q\nabla_x\phi(x+tv)\cdot\left\{\nabla_v-t\nabla_x\right\}\gamma_{x^j}&=q\nabla_x\partial_{x^j}\phi(x+tv)\cdot\left\{\nabla_v-t\nabla_x\right\}\gamma,\\
\partial_t\gamma_{v^j}-q\nabla_x\phi(x+tv)\cdot\left\{\nabla_v-t\nabla_x\right\}\gamma_{v^j}&=qt\nabla_x\partial_{x^j}\phi(x+tv)\cdot\left\{\nabla_v-t\nabla_x\right\}\gamma.
\end{split}
\end{equation}
We can use this to control the rest of the norms in \eqref{BootstrapCcl2}. Note that since
\begin{equation}\label{DivergenceFree}
\hbox{div}_{x,v}(V)=0,\qquad V(x,v)=(-t\nabla_x\phi(x+tv,t),\nabla_x\phi(x+tv))\in\mathbb{R}^3_x\times\mathbb{R}^3_v
\end{equation}
 the left-hand side is conservative and hence only the righthand side contributes to changes in $L^2_{x,v}$ norms.

\subsubsection{Position}

We deduce from \eqref{CommEq} that
\begin{equation}
\begin{split}
\frac{d}{dt}\Vert x\gamma\Vert_{L^2_{x,v}}^2&\lesssim t\Vert\nabla_x\phi\Vert_{L^\infty_x}\Vert \gamma\Vert_{L^2}\Vert x\gamma\Vert_{L^2}.
\end{split}
\end{equation}
Using \eqref{BootstrapAss}-\eqref{BootstrapAss1} and Gr\"onwall, we obtain that
\begin{equation}
\begin{split}
\Vert x\gamma(t)\Vert_{L^2_{x,v}}&\le  \Vert x\gamma_0\Vert_{L^2_{x,v}}+C\varepsilon^2\Vert\gamma\Vert_{L^2_{x,v}} \ln t,
\end{split}
\end{equation}
which leads to the control of $x\gamma$ in \eqref{BootstrapCcl2}.

\subsubsection{Spatial regularity}

Commuting again \eqref{CommEq}, we see that for $1\le \abs{\alpha}\leq 2$
\begin{equation}
\begin{split}
\frac{1}{2}\frac{d}{dt}\Vert \partial_x^\alpha\gamma\Vert_{L^2}^2
&=q\sum_{\substack{\beta_1+\beta_2=\alpha,\,\,\vert\beta_2\vert<\vert\alpha\vert}}\iint \nabla_x\partial^{\beta_1}_x\phi(x+tv)\cdot\left\{\nabla_v-t\nabla_x\right\}\partial^{\beta_2}_x\gamma\cdot\partial_x^\alpha\gamma\\
&\lesssim \sum_{\substack{\beta_1+\beta_2=\alpha,\,\,\vert\beta_2\vert<\vert\alpha\vert}}\Vert\nabla_x^{\vert\beta_1\vert+1}\phi\Vert_{L^\infty_x}\left\{\Vert\nabla_v\partial^{\beta_2}_x\gamma\Vert_{L^2_{x,v}}+t\Vert\nabla_x\partial_x^{\beta_2}\gamma\Vert_{L^2_{x,v}}\right\}\Vert\partial_x^\alpha\gamma\Vert_{L^2_{x,v}}.
\end{split}
\end{equation}
Using \eqref{BootstrapAss}-\eqref{BootstrapAss1} and Gr\"onwall's Lemma, this remains bounded. The same estimates also work for $\abs{\alpha}=3$ as long as $\abs{\beta_1}<3$. If $\abs{\beta_1}=3$ (and thus $\alpha=\beta_1$, $\beta_2=0$) we estimate via \eqref{eq:DispLemmaL2} to obtain
\begin{equation}
  \abs{\iint \nabla_x\partial^{\alpha}_x\phi(x+tv)\cdot\left\{\nabla_v-t\nabla_x\right\}\gamma\cdot\partial_x^\alpha\gamma}\lesssim  \norm{\nabla_x\partial_x^\alpha\phi}_{L^2_{x}}(\norm{\nabla_v\gamma}_{L^\infty_x L^2_v}+t\norm{\nabla_x\gamma}_{L^\infty_xL^2_v})\norm{\partial_x^\alpha\gamma}_{L^2_{x,v}},
 \end{equation}
which is bounded by the bootstrap assumptions \eqref{BootstrapAss}-\eqref{BootstrapAss1}.

\subsection{Energy Estimates II: velocity regularity}\label{EEII}

Finally, the most difficult term comes from the $v$ derivative in \eqref{CommEq} and its higher order versions:
\begin{equation}\label{eq:vreg_base}
\begin{aligned}
\frac{1}{2}\frac{d}{dt}\Vert \partial_v^\alpha\gamma\Vert_{L^2}^2&=q\sum_{{\substack{\beta_1+\beta_2=\alpha\\\abs{\beta_2}<\abs{\alpha}}}}\iint_{\mathbb{R}^3_{x,v}}\nabla_x\partial_v^{\beta_1}\phi(x+tv)\cdot \partial_v^{\beta_2}\nabla_v\gamma(x,v)\cdot\partial^\alpha_v\gamma(x,v) dxdv\\
&\quad-q\sum_{{\substack{\beta_1+\beta_2=\alpha\\ \abs{\beta_2}<\vert\alpha\vert}}}t\iint_{\mathbb{R}^3_{x,v}}\nabla_x\partial_v^{\beta_1}\phi(x+tv)\cdot \partial_v^{\beta_2}\nabla_x\gamma(x,v)\cdot\partial^\alpha_v\gamma(x,v) dxdv.
\end{aligned}
\end{equation}
We treat the first term in \eqref{eq:vreg_base} using \eqref{BootstrapAss1}: If $\abs{\alpha}\leq 2$ we can directly estimate each summand
\begin{equation}
\begin{aligned}
 \left\vert\iint_{\mathbb{R}^3_{x,v}}\nabla_x\partial_v^{\beta_1}\phi(x+tv)\cdot \partial_v^{\beta_2}\nabla_v\gamma(x,v)\cdot\partial^\alpha_v\gamma(x,v) dxdv\right\vert&\lesssim t^{\abs{\beta_1}}\norm{\partial_x^{\beta_1}\nabla_x\phi}_{L^\infty_x}\norm{\gamma}_{L^2_xH^{\abs{\alpha}}_v}^2\\
 &\lesssim \varepsilon^2\ip{t}^{2\delta-2}\norm{\gamma}_{L^2_xH^{\abs{\alpha}}_v}^2,
\end{aligned} 
\end{equation}
and this also works when $\vert\alpha\vert=3$ and derivatives split: $\vert\beta_1\vert,\vert\beta_2\vert\le 2$. Finally, if all derivatives fall on $\nabla_x\phi$ we change variables
\begin{equation}
\begin{aligned}
 I&=\iint_{\mathbb{R}^3_{x,v}}\nabla_x\partial_v^{\alpha}\phi(x+tv)\cdot \nabla_v\gamma(x,v)\cdot\partial^\alpha_v\gamma(x,v) dxdv\\
 &=t^{-3}\iint_{\mathbb{R}^3_{x,a}}\nabla_x\partial_v^{\alpha}\phi(x)\cdot \nabla_v\gamma(a,\frac{x-a}{t})\cdot\partial^\alpha_v\gamma(a,\frac{x-a}{t}) dxda
\end{aligned} 
\end{equation}
and therefore, using \eqref{InterpolationZNorm} and \eqref{BootstrapAss1},
\begin{equation}
\begin{aligned}
 \vert I\vert&\lesssim t^{\vert\alpha\vert-3}\Vert\nabla_x\partial_x^\alpha\phi\Vert_{L^2_x}\Vert \partial_v^\alpha\gamma(a,\frac{x-a}{t})\Vert_{L^2_{x,a}}\Vert \nabla_v\gamma(a,\frac{x-a}{t})\Vert_{L^\infty_xL^2_a}\\
 &\lesssim t^{\vert\alpha\vert-\frac{3}{2}}\norm{\nabla_x\partial_x^{\alpha}\phi}_{L^2_x}\norm{\nabla_v\gamma}_{Z^\prime}\norm{\gamma}_{L^2_xH^{3}_v}\lesssim \varepsilon^2\ip{t}^{2\delta-2}\norm{\gamma}_{L^2_xH^{3}_v}^2.
\end{aligned} 
\end{equation}

The same considerations allow to control the second term in \eqref{eq:vreg_base} when $0\le t\le 1$. For $t\ge1$, due to the extra factor $t$ and the slow growth of $\norm{\gamma}_{L^2_xH^3_v}$, more care is needed. We let
\begin{equation}\label{MainIntegralTerm}
\begin{split}
I_{\beta_1,\beta_2}&:=\sum_{{\substack{\beta_1+\beta_2=\alpha\\\vert \beta_2\vert<\vert\alpha\vert}}}t\iint_{\mathbb{R}^3_{x,v}}\nabla_x\partial_v^{\beta_1}\phi(x+tv)\cdot \partial_v^{\beta_2}\nabla_x\gamma(x,v)\cdot\partial^\alpha_v\gamma(x,v) dxdv.
\end{split}
\end{equation}

\paragraph{\textbf{Case $\abs{\alpha}=1$.}} Here we necessarily have $\beta_1=\alpha$. Using that $\Vert\nabla_x\gamma\Vert_Z$ is uniformly bounded, we can then proceed as follows: We recognize that $I_{\alpha,0}$ can be written as (identifying an operator and its kernel)
\begin{equation}
\begin{split}
 I_{\alpha,0}&=t^2\iint_{\mathbb{R}^3}\gamma(y-tu,u)\gamma(y-tu,u)\mathcal{M}_{jk}(x-y)\partial_{x^j}\gamma(x-tv,v)\partial_{v^k}\gamma(x-tv,v)dxdydudv,\\
\mathcal{M}_{jk}&=(-\Delta)^{-1}\partial_j\partial_k=R_jR_k.
\end{split}
\end{equation}
Now changing variables, we can rewrite this as
\begin{equation}
\begin{split}
 I_{\alpha,0}&=t^{-4}\iint_{\mathbb{R}^3}\left\{\gamma(a,\frac{y-a}{t})\gamma(a,\frac{y-a}{t})\right\}\mathcal{M}_{jk}(x-y)\left\{\partial_{x^j}\gamma(b,\frac{x-b}{t})\partial_{v^k}\gamma(b,\frac{x-b}{t})\right\}dadb dxdy.
\end{split}
\end{equation}
Since $\mathcal{M}_{jk}$ is bounded as a map $L^2\to L^2$, using \eqref{GainDifference}, we see that
\begin{equation}
\begin{split}
\vert I_{\alpha,0}\vert&\lesssim t^{-4}\Vert \gamma(a,\frac{y-a}{t}) \gamma(a,\frac{y-a}{t})\Vert_{L^2_y L^1_a}\Vert \partial_{x^j}\gamma(b,\frac{x-b}{t})\partial_{v^k}\gamma(b,\frac{x-b}{t})\Vert_{L^2_xL^1_b}\\
&\lesssim t^{-1}\Vert \gamma\Vert_{Z'}\Vert\nabla_x\gamma\Vert_{Z'}\Vert \gamma\Vert_{L^2_{x,v}}\Vert\nabla_v\gamma\Vert_{L^2_{x,v}},
\end{split}
\end{equation}
and using \eqref{BootstrapAss} and integrating, we find the bound in \eqref{BootstrapCcl2}.

\medskip

\paragraph{\textbf{Case $\abs{\alpha}\ge2$.}} For higher $\vert \alpha\vert$, we use \eqref{DecR} to decompose
\begin{equation}
\begin{split}
 I_{\beta_1,\beta_2}&=t^{\vert\beta_1\vert+1}\int_{R=0}^\infty I^{\beta_1,\beta_2}_R\frac{dR}{R^2}\\
I^{\beta_1,\beta_2}_R&= \iint_{\mathbb{R}^3}\gamma(y-tu,u)\gamma(y-tu,u)\partial_x^{\beta_1}\partial_{x^j}\left\{\chi(R^{-1}\vert x-y\vert)\right\}\partial_{x^j}\partial_v^{\beta_2}\gamma(x-tv,v)\partial_{v}^\alpha\gamma(x-tv,v)dxdydudv.
\end{split}
\end{equation}
And we claim that, for $t\ge 100$,
\begin{equation}\label{ElementaryBounds}
\begin{split}
\vert I^{\beta_1,\beta_2}_R\vert&\lesssim R^{-1-\vert\beta_1\vert}\varepsilon^2\left[(\ln t)^{2\vert\alpha\vert-1}\varepsilon^2+(\ln t)^{-1}\Vert \gamma\Vert_{L^2_xH^{\vert\alpha\vert}_v}^2\right],\\
\vert I^{\beta_1,\beta_2}_R\vert&\lesssim R^2t^{-3-\vert\beta_1\vert}(t/R)^\frac{1}{2}\left[\varepsilon^2\Vert\gamma\Vert_{L^2_xH^{\vert\alpha\vert}_v}^2+\varepsilon\Vert \gamma\Vert_{L^2_{x}H^{\vert\alpha\vert}_v}^{3}\right],\qquad R\le t,\\
\vert I^{\beta_1,\beta_2}_R\vert&\lesssim_\delta Rt^{-2-\vert\beta_1\vert}\varepsilon^2\left[(\ln t)^{2\vert\alpha\vert-1}(\ln \ln t)^{4\vert\alpha\vert-1}\varepsilon^\frac{3}{2}+(\ln t\cdot\ln \ln t)^{-1}\Vert \gamma\Vert_{L^2_xH^{\vert\alpha\vert}_v}^2\right].
\end{split}
\end{equation}
We can combine these bounds and Gr\"onwall estimate to obtain the last energy bounds in \eqref{BootstrapCcl2}. We integrate the first bound for $R\ge t$, the second for $0\le R\le t/(\ln t)^{100}$ and the last for $t/(\ln t)^{100}\le R\le t$, to get
\begin{equation*}
\begin{split}
\frac{d}{dt}\Vert \gamma\Vert_{L^2_xH^{\vert\alpha\vert}_v}^2&\lesssim \varepsilon^4t^{-1}(\ln t)^{2\alpha}+\frac{\varepsilon^2}{t\ln t}\Vert \gamma\Vert_{L^2_xH^{\vert\alpha\vert}_v}^2+\frac{\varepsilon}{t(\ln t)^{50}}\Vert \gamma\Vert_{L^2_xH^{\vert\alpha\vert}_v}^{3}+\varepsilon^\frac{7}{2}t^{-1}(\ln t)^{2\vert\alpha\vert-1}(\ln \ln (t))^{4\vert\alpha\vert}
\end{split}
\end{equation*}
which lead to \eqref{BootstrapCcl2}.

\medskip

To get the first bound in \eqref{ElementaryBounds}, we use a crude estimate
\begin{equation*}
\begin{split}
\vert I^{\beta_1,\beta_2}_R\vert&\lesssim R^{-1-\vert\beta_1\vert}\Vert \gamma\Vert_{L^2_{x,v}}^2\Vert\partial_x\partial_v^{\beta_2}\gamma\Vert_{L^2_{x,v}}\Vert \partial_v^\alpha\gamma\Vert_{L^2_{x,v}}\lesssim R^{-1-\vert\beta_1\vert}\Vert \gamma\Vert_{L^2_{x,v}}^2\Vert \gamma\Vert_{L^2_{x}H^{\vert\alpha\vert}_{v}}^{1+\frac{\vert\beta_2\vert}{\vert\alpha\vert}}\Vert \gamma\Vert_{L^2_vH^{\frac{\vert\alpha\vert}{\vert\beta_1\vert}}_x}^\frac{\vert\beta_1\vert}{\vert\alpha\vert}\\
&\lesssim R^{-1-\vert\beta_1\vert}\varepsilon^{2+\frac{\vert\beta_1\vert}{\vert\alpha\vert}}\Vert \gamma\Vert_{L^2_xH^{\vert\alpha\vert}_v}^{1+\frac{\vert\beta_2\vert}{\vert\alpha\vert}}
\end{split}
\end{equation*}
and using convexity, this gives the first estimate in \eqref{ElementaryBounds}.

\medskip

On the other hand, we can change variables and integrate by parts to get
\begin{equation}
\begin{split}
I^{\beta_1,\beta_2}_R&= t^{-6}\iint_{\mathbb{R}^3}\gamma(a,\frac{y-a}{t})\gamma(a,\frac{y-a}{t})\partial_x^{\beta_1}\partial_{x^j}\left\{\chi(R^{-1}\vert x-y\vert)\right\}\partial_{x^j}\partial_v^{\beta_2}\gamma(b,\frac{x-b}{t})\partial_{v}^\alpha\gamma(b,\frac{x-b}{t})dxdydadb\\
&=t^{-6-\vert\beta_1\vert}\sum_{\theta_1+\theta_2=\beta_1,\,\,\theta_1\le\theta_2}c_{\theta_1,\theta_2}I^{\theta_1,\theta_2,\beta_2}_R,\\
I^{\theta_1,\theta_2,\beta_2}_R&:=\iint_{\mathbb{R}^3}\partial_v^{\theta_1}\gamma(a,\frac{y-a}{t})\partial_v^{\theta_2}\gamma(a,\frac{y-a}{t})\partial_{x^j}\left\{\chi(R^{-1}\vert z\vert)\right\}\partial_{x^j}\partial_v^{\beta_2}\gamma(b,\frac{z+y-b}{t})\partial_{v}^\alpha\gamma(b,\frac{z+y-b}{t})dzdydadb.
\end{split}
\end{equation}
In case $\theta_1=\beta_2=0$, $\theta_2=\beta_1=\alpha$, we see that
\begin{equation*}
\begin{split}
\vert I^{0,\alpha,0}_R\vert&\lesssim R^{2}t^3\Vert \gamma\Vert_{Z^\prime}\Vert \partial_x\gamma\Vert_{Z^\prime}\Vert \partial_v^\alpha\gamma\Vert_{L^2_{x,v}}^2\lesssim R^2t^3\varepsilon^2\Vert\partial_v^\alpha\gamma\Vert_{L^2_{x,v}}^2.
\end{split}
\end{equation*}
In case $\theta_1=0$, $\theta_2=\beta_1$, $\beta_2\ne 0$, H\"older's inequality gives
\begin{equation*}
\begin{split}
\vert I^{0,\theta_2,\beta_2}_R\vert&\lesssim R^{-1}\Vert \gamma(a,\frac{y-a}{t})\Vert_{L^\infty_yL^2_a}\Vert \partial_v^{\theta_2}\gamma(a,\frac{y-a}{t})\Vert_{L^6_yL^2_a}\Vert (\nabla\chi)(R^{-1}\vert x-y\vert)\Vert_{L^{\frac{6}{5}}_y}\\
&\qquad\times\Vert \partial_{x^j}\partial_v^{\beta_2}\gamma(b,\frac{x-b}{t})\Vert_{L^2_{x,b}}\Vert \partial_v^\alpha\gamma(b,\frac{x-b}{t})\Vert_{L^2_{x,b}}\\
&\lesssim R^{-1}\Vert \gamma\Vert_{Z^\prime}\cdot t^\frac{1}{2}\Vert \gamma\Vert_{L^2_xH^{\vert\beta_1\vert+1}_v}\cdot R^\frac{5}{2}\cdot t^3\cdot\Vert\nabla_x\partial_v^{\beta_2}\gamma\Vert_{L^2_{x,v}}\Vert \partial_v^\alpha\gamma\Vert_{L^2_{x,v}}\\
&\lesssim R^2t^3(t/R)^\frac{1}{2}\cdot \Vert \gamma\Vert_{Z^\prime}\cdot\Vert \gamma\Vert_{L^2_{x}H^{\vert\alpha\vert}_v}^{1+\frac{\vert\beta_1\vert+1}{\vert\alpha\vert}+\frac{\vert\beta_2\vert}{\vert\alpha\vert}}\Vert \gamma\Vert_{L^2_xH^{\vert\alpha\vert}_x}^{1-\frac{1}{\vert\alpha\vert}}\\
&\lesssim R^2t^3(t/R)^\frac{1}{2}\cdot\varepsilon^{2-\frac{1}{\vert\alpha\vert}}\Vert \gamma\Vert_{L^2_{x}H^{\vert\alpha\vert}_v}^{2+\frac{1}{\vert\alpha\vert}}
\end{split}
\end{equation*}
while if $\beta_2=0$, $\theta_1\ne0$, we proceed similarly
\begin{equation*}
\begin{split}
\vert I^{\theta_1,\theta_2,0}_R\vert&\lesssim R^{-1}\Vert \partial_v^{\theta_1}\gamma(a,\frac{y-z-a}{t})\Vert_{L^2_{y,a}}\Vert \partial_v^{\theta_2}\gamma(a,\frac{y-z-a}{t})\Vert_{L^6_zL^2_a}\Vert (\nabla\chi)(R^{-1}\vert z\vert)\Vert_{L^{\frac{6}{5}}_z}\\
&\qquad\times\Vert \nabla_x\gamma(b,\frac{y-b}{t})\Vert_{L^\infty_{y}L^2_{b}}\Vert \partial_v^\alpha\gamma(b,\frac{y-b}{t})\Vert_{L^2_{y,b}}\\
&\lesssim R^2t^3(t/R)^\frac{1}{2}\cdot\varepsilon^{2-\frac{1}{\vert\alpha\vert}}\Vert \gamma\Vert_{L^2_{x}H^{\vert\alpha\vert}_v}^{2+\frac{1}{\vert\alpha\vert}}.
\end{split}
\end{equation*}
Finally, if $\theta_1\ne0$, $\beta_2\ne 0$, then $\vert\alpha\vert=3$ and we obtain
\begin{equation}
\begin{split}
\vert I^{\theta_1,\theta_2,\beta_2}_R\vert&\lesssim R^2\Vert \partial_v\gamma(a,\frac{y-a}{t})\Vert_{L^6_yL^2_a}^2\Vert \partial_{x^j}\partial_v\gamma(b,\frac{z+y-b}{t})\Vert_{L^6_yL^2_b}\Vert \partial_{v}^\alpha\gamma(b,\frac{z+y-b}{t})\Vert_{L^2_{y,b}}\\
&\lesssim R^2t^3\Vert \gamma\Vert_{L^2_xH^2_v}^2\Vert \gamma\Vert_{H^1_xH^2_v}\Vert \gamma\Vert_{L^2_{x}H^3_v}\lesssim R^2t^3\varepsilon\Vert \gamma\Vert_{L^2_{x}H^3_v}^3.
\end{split}
\end{equation}

\medskip

We now obtain improved bounds in the regime $R\sim t$. In this case, we leave one additional derivative on the kernel to get
\begin{equation*}
\begin{split}
I^{\beta_1,\beta_2}_R&=t^{-5-\vert\beta_1\vert}\sum_{\theta_1+\theta_2+\theta_3=\beta_1,\,\,\theta_1\le\theta_2}c_{\theta_1,\theta_2,\theta_3}I^{\theta_1,\theta_2,\theta_3,\beta_2}_R,\\
I^{\theta_1,\theta_2,\theta_3,\beta_2}_R&=\iint_{\mathbb{R}^3}\partial_v^{\theta_1}\gamma(a,\frac{y-a}{t})\partial_v^{\theta_2}\gamma(a,\frac{y-a}{t})\partial_{x}^{\theta_3}\partial_{x_i}\left\{\chi(R^{-1}\vert x-y\vert)\right\}\partial_{x^j}\partial_v^{\beta_2}\gamma(b,\frac{x-b}{t})\partial_{v}^\alpha\gamma(b,\frac{x-b}{t})dxdydadb
\end{split}
\end{equation*}
with $\vert\theta_3\vert=1$, $\vert\theta_1\vert+\vert\theta_2\vert+\vert\theta_3\vert+\vert\beta_2\vert=\vert\alpha\vert$. If $\vert\theta_1\vert=\vert\theta_2\vert=0$, we can directly compute
\begin{equation*}
\begin{split}
\vert I^{0,0,\theta_3,\beta_2}_R\vert&\lesssim R\Vert\gamma\Vert_{Z^\prime}^2\Vert \gamma\Vert_{H^1_xH^{\vert\alpha\vert-1}_v}\Vert \gamma\Vert_{L^2_xH^{\vert\alpha\vert}_v}\lesssim R\varepsilon^2\Vert \gamma\Vert_{H^1_xH^{\vert\alpha\vert-1}_v}\Vert \gamma\Vert_{L^2_xH^{\vert\alpha\vert}_v}.
\end{split}
\end{equation*}
If $\vert\theta_1\vert=\vert\beta_2\vert=0$, then we can proceed similarly
\begin{equation*}
\begin{split}
\vert I^{0,\theta_2,\theta_3,0}_R\vert&\lesssim R\Vert\gamma\Vert_{Z^\prime}\Vert\nabla_x\gamma\Vert_{Z^\prime}\Vert \gamma\Vert_{L^2_xH^{\vert\alpha\vert-1}_v}\Vert \gamma\Vert_{L^2_xH^{\vert\alpha\vert}_v}\lesssim R\varepsilon^2\Vert \gamma\Vert_{H^1_xH^{\vert\alpha\vert-1}_v}\Vert \gamma\Vert_{L^2_xH^{\vert\alpha\vert}_v}.
\end{split}
\end{equation*}
Finally, if $\theta_2\ne 0$, $\beta_2\ne 0$, $\vert\alpha\vert=3$, $\vert\theta_1\vert=0$ and we decompose in Littlewood-Paley pieces:
\begin{equation*}
\begin{split}
I^{0,\theta_2,\theta_3,\beta_2}_R&=\sum_{C_1,C_2}I^{0,\theta_2,\theta_3,\beta_2}_{R,C_1,C_2},\\
I^{0,\theta_2,\theta_3,\beta_2}_{R,C_1,C_2}&:=\iint_{\mathbb{R}^3}\gamma(a,\frac{y-a}{t})\partial_v^{\theta_2}\gamma_{C_1}(a,\frac{y-a}{t})\partial_{x}^{\theta_3}\partial_{x_i}\left\{\chi(R^{-1}\vert x-y\vert)\right\}\partial_{x^j}\partial_v^{\beta_2}\gamma_{C_2}(b,\frac{x-b}{t})\partial_{v}^\alpha\gamma(b,\frac{x-b}{t})dxdydadb.\\
\end{split}
\end{equation*}
In case $\min\{C_1,C_2\}\le 1$, the derivative is favorable and one can proceed as follows. From now on, we may assume that the sums are over dyadic $C_1,C_2\ge 1$. Proceeding as above, we can bound
\begin{equation*}
\begin{split}
\vert I^{0,\theta_2,\theta_3,\beta_2}_{R,C_1,C_2}\vert&\lesssim Rt^3\Vert\gamma\Vert_{Z^\prime}\Vert\gamma\Vert_{L^2_xH^{\vert\alpha\vert}_v}\cdot\min\{C_2^{-1}\Vert\partial_v\gamma_{C_1}(a,\frac{y-a}{t})\Vert_{L^\infty_yL^2_a}\Vert\gamma\Vert_{H^1_xH^2_v}\,,\,C_1C_2\Vert \gamma_{C_1}\Vert_{L^2_{x,v}}\Vert\nabla_x\gamma\Vert_{Z^{\prime}}\}\\
\end{split}
\end{equation*}
using that
\begin{equation*}
\begin{split}
\Vert \partial_v\gamma_{C}(a,\frac{y-a}{t})\Vert_{L^\infty_yL^2_a}&\lesssim \min\{C\Vert \gamma\Vert_{Z^\prime},C^{-\frac{3}{2}}\Vert \gamma\Vert_{L^2_xH^3_v}\}\lesssim \min\{C\varepsilon,C^{-\frac{3}{2}}\Vert \gamma\Vert_{L^2_xH^3_v}\}\\
\Vert \gamma_C\Vert_{L^2_{x,v}}&\lesssim\min\{\Vert\gamma\Vert_{L^2_{x,v}},C^{-3}\Vert\gamma\Vert_{L^2_xH^3_v}\}\lesssim\min\{\varepsilon,C^{-3}\Vert\gamma\Vert_{L^2_xH^3_v}\}
\end{split}
\end{equation*}
and summing the bounds above and using interpolation, one finds that
\begin{equation*}
\begin{split}
\sum_{C_1\le C_2}\vert I^{0,\theta_2,\theta_3,\beta_2}_{R,C_1,C_2}\vert&\lesssim Rt^3\varepsilon\Vert\gamma\Vert_{L^2_xH^{\vert\alpha\vert}_v}\Vert \gamma\Vert_{H^1_xH^2_v}\sum_{C_1}\min\{\varepsilon,C_1^{-\frac{1}{2}}\Vert\gamma\Vert_{L^2_xH^3_v} \}\\
&\lesssim Rt^3\varepsilon^\frac{7}{3}\Vert\gamma\Vert_{L^2_xH^{\vert\alpha\vert}_v}^{\frac{5}{3}}\ln\langle\Vert\gamma\Vert_{L^2_xH^{\vert\alpha\vert}_v}\rangle,\\
\sum_{C_2< C_1}\vert I^{0,\theta_2,\theta_3,\beta_2}_{R,C_1,C_2}\vert&\lesssim Rt^3\varepsilon^2\Vert\gamma\Vert_{L^2_xH^{\vert\alpha\vert}_v}\sum_{C_1}\min\{\varepsilon C_1^2,C_1^{-1}\Vert\gamma\Vert_{L^2_xH^3_v}\}
\end{split}
\end{equation*}
In total, this gives, for any $\delta>0$,
\begin{equation*}
\begin{split}
\vert I^{0,\theta_2,\theta_3,\beta_2}_R\vert&\lesssim Rt^3\varepsilon^{2}\left\{\frac{1}{\ln \langle t\rangle\cdot\ln\langle\ln\langle t\rangle\rangle}\Vert\gamma\Vert_{L^2_xH^{\vert\alpha\vert}_v}^2+\varepsilon^\frac{3}{2}(\ln\langle t\rangle)^5(\ln\langle \ln \langle t\rangle\rangle)^{11}\right\}
\end{split}
\end{equation*}
which leads to an acceptable contribution in \eqref{BootstrapCcl2}

\qed

\section{Nonlinear analysis II: asymptotic flow and strong convergence}\label{NAII}
Once we have isolated the scattering mass in \eqref{ScatteringMass}, we can simplify the dynamics along rays by studying the electric field $\nabla_x\phi$. We compute
\begin{equation}
\begin{aligned}
\nabla_x\phi(x+tv)&=-\frac{1}{4\pi}\iint_{\mathbb{R}^3}\frac{x+tv-y}{\vert x+tv-y\vert^3}\gamma^2(y-tu,u)dudy=\frac{1}{4\pi}\frac{1}{t^3}\iint_{\mathbb{R}^3}\frac{z}{\vert z\vert^3}\gamma^2(a,\frac{x-a+z}{t}+v)dadz.
\end{aligned}
\end{equation}
The main contribution to this will come from
\begin{equation}\label{eq:defE}
\begin{split}
E_{main}(v,t)&:=\frac{1}{4\pi}\frac{1}{t^3}\iint_{\mathbb{R}^3}\frac{z}{\vert z\vert^3}\gamma^2(a,\frac{z}{t}+v)dadz=\frac{1}{4\pi}\frac{1}{t^2}\int_{\mathbb{R}^3}\frac{\zeta}{\vert \zeta\vert^3}m_t(\zeta-v)d\zeta.
\end{split}
\end{equation}
This expression only involves the scattering mass which converges. We thus define
\begin{equation*}
\widetilde{E}(v):=\frac{1}{4\pi}\int_{\mathbb{R}^3}\frac{\zeta}{\vert \zeta\vert^3}m_\infty(\zeta-v)d\zeta.
\end{equation*}
Note that $m_\infty\in L^1\cap L^\infty$, so that $\widetilde{E}(v)$ is well defined and $E_{main}=t^{-2}\widetilde{E}+o(t^{-2})$. Inspired by the characteristics of
\begin{equation}\label{VPRes}
\begin{split}
\partial_tf(x,v,t)&=qE(v,t)\cdot\left\{\nabla_v-t\nabla_x\right\}f(x,v,t)
\end{split}
\end{equation}
we define for $t\ge1$,
\begin{equation}\label{eq:def_sig}
 \sigma(x,v,t)=\gamma(X,v,t),\qquad X:=x+\ln(t)\cdot \widetilde{E}(v).
\end{equation}

\begin{proposition}
 Let $\gamma(x,v,t)$ be a global solution of \eqref{VPNew} as in Proposition \ref{prop:sol}. Then with $X,\sigma$ as in \eqref{eq:def_sig} there exists $\sigma_\infty(x,v)\in Z\cap H^1_{x,v}$ such that
 \begin{equation}
  \lim_{t\to\infty}\sigma(x,v,t)=\sigma_\infty(x,v) \quad\text{ in }Z\cap H^1_{x,v}.
 \end{equation}
\end{proposition}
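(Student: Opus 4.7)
The plan is to show $\partial_t\sigma$ is absolutely integrable in $Z\cap H^1_{x,v}$, making $\sigma(t)$ Cauchy as $t\to\infty$. A chain-rule computation using \eqref{VPNew} and $\partial_t X = t^{-1}\widetilde{E}(v)$ yields
\begin{equation*}
\partial_t\sigma(x,v,t) = q(\nabla_x\phi)(X+tv)\cdot(\nabla_v\gamma)(X,v,t) + \mathcal{R}(v,t)\cdot(\nabla_x\gamma)(X,v,t),
\end{equation*}
with residual $\mathcal{R}(v,t):=t^{-1}\widetilde{E}(v)-qt\,(\nabla_x\phi)(X+tv)$ (signs consistent with the gauge convention of \eqref{eq:def_sig}). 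The first summand is immediately controlled by dispersive decay; the second is where the logarithmic gauge correction becomes essential.

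\textbf{Integrability in the $Z$-norm.} The first summand is bounded by $\|\nabla_x\phi\|_{L^\infty_x}\|\nabla_v\gamma\|_Z \lesssim \langle t\rangle^{-2}(\ln\langle t\rangle)^{O(1)}\varepsilon^3$ using \eqref{BootstrapAss1}, \eqref{InterpolationZNorm} and \eqref{BootstrapCcl2}; this is time-integrable. For the second summand I would split
\begin{equation*}
\mathcal{R}(v,t) = -qt\bigl[(\nabla_x\phi)(X+tv)-E_{main}(v,t)\bigr] + t^{-1}\bigl[\widetilde{E}(v)-qt^2 E_{main}(v,t)\bigr].
\end{equation*}
The latter bracket decays like $\langle t\rangle^{-1/2}\varepsilon^3$ by the quantitative convergence of the scattering mass \eqref{ConvergenceScatteringMass}, so, against $\|\nabla_x\gamma\|_Z\lesssim\varepsilon$ from \eqref{BootstrapCcl2}, it gives an integrable $\langle t\rangle^{-3/2}$ contribution. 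For the former bracket, \eqref{IntegralElectricField} and \eqref{eq:defE} give
\begin{equation*}
(\nabla_x\phi)(y+tv)-E_{main}(v,t)=\frac{1}{4\pi t^3}\iint\frac{z}{|z|^3}\bigl[\gamma^2(a,\tfrac{y-a+z}{t}+v)-\gamma^2(a,\tfrac{z}{t}+v)\bigr]da\,dz,
\end{equation*}
from which a translation estimate modeled on \eqref{GainDifference} extracts an extra $\langle t\rangle^{-\kappa}$, $\kappa>0$, beyond the $\langle t\rangle^{-2}$ already present in \eqref{BoundElectricFieldSharp}; multiplying by $t$ still keeps it integrable.

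\textbf{$H^1$ convergence and main obstacle.} For $\nabla_x\sigma$ the previous reasoning transfers verbatim, since $\nabla_x X = I$, using \eqref{eq:DispLemmaLinf}--\eqref{eq:DispLemmaL2} to control higher $x$-derivatives of $\phi$ and the energy bounds in \eqref{BootstrapCcl2}. The delicate part, and the main obstacle, is $\nabla_v\sigma$: by the chain rule,
\begin{equation*}
\nabla_v\sigma = (\nabla_v\gamma)(X,v,t) + \ln(t)\,\nabla_v\widetilde{E}(v)\cdot(\nabla_x\gamma)(X,v,t),
\end{equation*}
so $\partial_t\nabla_v\sigma$ carries an explicit $\ln t$ factor multiplying $\partial_t[(\nabla_x\gamma)(X,v,t)]$. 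The only way to close is to notice that the evolution equation satisfied by $\nabla_x\gamma$ itself features the same asymptotic cancellation $qt(\nabla_x\phi)(X+tv)=t^{-1}\widetilde{E}(v)+O(\langle t\rangle^{-1-\kappa})$, so that, combined with the logarithmically-weighted energy bounds on $L^2_xH^2_v$ in \eqref{BootstrapCcl2}, the product against $\ln(t)$ still yields an integrable majorant. This provides convergence of $\nabla_v\sigma$ in $L^2_{x,v}$, and hence of $\sigma$ in $Z\cap H^1_{x,v}$ by completeness, with $\sigma_\infty$ in the same space.
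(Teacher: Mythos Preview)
Your overall strategy---show $\partial_t\sigma$ is integrable in $Z\cap H^1_{x,v}$ by exploiting the cancellation $\mathcal{R}=t^{-1}\widetilde{E}-qt\nabla_x\phi(X+tv)$---is exactly the paper's. Two genuine gaps remain, however.

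\textbf{The $X$-dependence.} In your first bracket $-qt[(\nabla_x\phi)(X+tv)-E_{main}(v,t)]$, the integrand involves a translation by $\frac{X-a}{t}$, not just by $\frac{a}{t}$. The estimate \eqref{GainDifference} gains $t^{-1/6}$ by trading $\|x\gamma\|_{L^2}$ against the $a/t$ shift; it gives \emph{nothing} for the $X/t$ shift, because $X=x+\ln(t)\widetilde{E}(v)$ is the outer spatial variable and can be arbitrarily large. The paper resolves this by isolating the $X$-dependent piece (their $M_3$) and splitting on $\{|X|\le t^{1/6}\}\cup\{|X|\ge t^{1/6}\}$: on the first set the translation is small and one gains as you suggest; on the second set one uses the spatial weight on the \emph{other} factor, via an interpolation bound $\||x|^{1/8}\nabla_x\gamma\|_Z\lesssim \|x\gamma\|_{L^2}+\|\gamma\|_{H^3_{x,v}}$. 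Without this split your claimed uniform extra decay $\langle t\rangle^{-\kappa}$ is not justified.

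\textbf{The Riesz transform in $H^1_v$.} Your description of the $\nabla_v\sigma$ estimate is too schematic and hides the real difficulty. When you differentiate $\mathcal{R}$ in $v$, the leading piece is $t^{-1}\partial_{v^k}\widetilde{E}^p - qt^2\partial_{x^k}\partial_{x^p}\phi(X+tv)$. Both terms are second-order Riesz transforms ($R_kR_p$) of, respectively, $m_\infty$ and $t^3\rho$; their difference is $t^{-1}R_kR_p$ applied to the same mass discrepancies $M_j$ that gave $\mathcal{R}$ its decay. But Riesz transforms are unbounded on $L^\infty$, so you cannot simply say ``the same asymptotic cancellation'' yields $O(t^{-1-\kappa})$ in $L^\infty$. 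The paper instead estimates $\|R_kR_pM_j\cdot\nabla_x\gamma\|_{L^2_{x,v}}\lesssim \|M_j\|_{L^4_v}\|\nabla_x\gamma\|_{L^4_vL^2_x}$ via $L^p$-boundedness of Riesz transforms, and again needs the $X$-splitting for $M_3$. Your sentence ``the evolution equation satisfied by $\nabla_x\gamma$ itself features the same asymptotic cancellation'' does not capture this: the cancellation is there in spirit, but lives in $L^p$ rather than $L^\infty$, and this distinction is what makes the $H^1_v$ estimate the delicate part.
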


\begin{proof}
In the following, we may assume $t\ge100$. From \eqref{eq:def_sig} we compute that
\begin{equation}\label{NewDer}
\begin{split}
\partial_{x^j}\sigma&=\partial_{x^j}\gamma,\qquad
\partial_{v^j}\sigma=\partial_{v^j}\gamma+\ln(t)\cdot \partial_{v^j}\widetilde{E}^k \partial_{x^k}\gamma,\qquad
\partial_t\sigma=\partial_t\gamma+\frac{q}{t}\widetilde{E}^p\partial_{x^p}\gamma,
\end{split}
\end{equation}
from which we obtain the equation
\begin{equation}\label{eq:sigma}
 \partial_t\sigma(x,v)=q\partial_{x^k}\phi(X+tv)(\partial_{v^k}\gamma)(X,v)+q\left\{\frac{1}{t}\widetilde{E}^p-t\partial_{x^p}\phi(X+vt)\right\}\cdot\partial_{x^p}\gamma(X,v).
\end{equation}
We claim that this is integrable in time in both $Z$ and $H^1_{x,v}$. 

\medskip

We start with the first term in \eqref{eq:sigma}. For $1\le j,k\le 3$, we compute
\begin{equation}
\begin{aligned}
 \norm{\partial_{x^k}\phi(X+tv)(\partial_{v^k}\gamma)(X,v)}_{Z}&\lesssim \norm{\nabla\phi}_{L^\infty}\cdot\norm{\nabla_v\gamma}_{Z},\\
 \norm{\partial_{x^j}[\partial_{x^k}\phi(X+tv)(\partial_{v^k}\gamma)(X,v)]}_{L^2_{x,v}}&\lesssim \norm{\partial_{x^j}\nabla\phi}_{L^\infty}\norm{\gamma}_{L^2_xH^1_v}+\norm{\nabla\phi}_{L^\infty}\norm{\gamma}_{H^1_xH^1_v},\\
 \norm{\partial_{v^j}[\partial_{x^k}\phi(X+tv)(\partial_{v^k}\gamma)(X,v)]}_{L^2_{x,v}}&\lesssim t\norm{\partial_{x^j}\nabla\phi}_{L^\infty}\norm{\gamma}_{L^2_xH^1_v}+\norm{\nabla\phi}_{L^\infty}\norm{\gamma}_{L^2_xH^2_v}\\
 &\qquad +\ln(t)\norm{\nabla\phi}_{L^\infty}\Vert\nabla_v\widetilde{E}(v)\Vert_{L^4}\norm{\nabla_v\gamma}_{H^1_xL^4_v}.
\end{aligned} 
\end{equation}
By boundedness of the Riesz transform, we see that
\begin{equation}
 \partial_{v^j}\widetilde{E}^k(v)=R_jR_k \tilde{m}_\infty\in L^p,\quad 1<p<\infty,\qquad \tilde{m}_\infty(x)=m_\infty(-x)
\end{equation}
so together with the bounds \eqref{BootstrapCcl2} on $\gamma$ and the decay \eqref{BootstrapAss1} of $\nabla\phi$, time integrability of the first term follows. For the second term in \eqref{eq:sigma} we compute that

\begin{equation}\label{eq:keysigma}
\begin{aligned}
\mathcal{DE}&:=\frac{1}{t}\widetilde{E}^p-t\partial_{x^p}\phi(X+vt)=\frac{1}{t}\nabla(-\Delta)^{-1}\left\{M_1+M_2+M_3\right\},\\
M_1(\zeta)&:=m_\infty(\zeta)-m_t(\zeta),\qquad
M_2(\zeta):=\int_{\mathbb{R}^3}\left\{\gamma^2(a,\zeta)-\gamma^2(a,\zeta-\frac{a}{t})\right\}da\\
M_3(\zeta)&:=\int_{\mathbb{R}^3}\left\{\gamma^2(a,\zeta-\frac{a}{t})-\gamma^2(a,\zeta+\frac{X-a}{t})\right\}da.
\end{aligned}
\end{equation}
We will often use the convolution structure. Sobolev inequality directly gives that
\begin{equation*}
\begin{split}
\frac{1}{t}\Vert \nabla(-\Delta)^{-1}M_j\Vert_{L^\infty_x}&\lesssim t^{-1}\Vert M_j\Vert_{L^2\cap L^4}.
\end{split}
\end{equation*}
This allows to bound the contribution of $M_1$ using \eqref{ConvergenceScatteringMass}.
We can treat $M_2$ similarly since
\begin{equation*}
\begin{split}
\Vert\mathfrak{1}_{\{\vert a\vert\ge t^\frac{1}{2}\}}\left\{\gamma^2(a,\zeta)-\gamma^2(a,\zeta-\frac{a}{t})\right\}\Vert_{L^1_{\zeta,a}}&\lesssim t^{-\frac{1}{2}}\Vert\gamma\Vert_{L^2_{x,v}}\Vert x\gamma\Vert_{L^2_{x,v}}\lesssim t^{-\frac{1}{3}}\varepsilon^2,\\
\Vert \mathfrak{1}_{\{\vert a\vert\le t^\frac{1}{2}\}}\left\{\gamma^2(a,\zeta)-\gamma^2(a,\zeta-\frac{a}{t})\right\}\Vert_{L^1_{\zeta,a}}&\lesssim t^{-\frac{1}{2}}\Vert\gamma\Vert_{L^2_{x,v}}\Vert\nabla_v\gamma\Vert_{L^2_{x,v}}\lesssim t^{-\frac{1}{3}}\varepsilon^2,\\
\Vert \gamma^2(a,\zeta)-\gamma^2(a,\zeta-\frac{a}{t})\Vert_{L^\infty_{\zeta}L^1_a}&\lesssim\Vert \gamma\Vert_{H^2_{x,v}}^2\lesssim\varepsilon^2t^{2\delta}
\end{split}
\end{equation*}
For $M_3$, we observe the bounds
\begin{equation*}
\begin{split}
\Vert \gamma^2(a,\zeta-\frac{a}{t})-\gamma^2(a,\zeta+\frac{X-a}{t})\Vert_{L^1_{\zeta,a}}&\lesssim t^{-1}\vert X\vert\Vert\gamma\Vert_{L^2_{x,v}}\Vert \nabla_v\gamma\Vert_{L^2_{x,v}}\lesssim t^{\delta-1}\vert X\vert\varepsilon^2,\\
\Vert \gamma^2(a,\zeta-\frac{a}{t})-\gamma^2(a,\zeta+\frac{X-a}{t})\Vert_{L^1_{\zeta,a}}&\lesssim \Vert\gamma\Vert_{L^2_{x,v}}^2\lesssim\varepsilon^2,\\
\Vert \gamma^2(a,\zeta-\frac{a}{t})-\gamma^2(a,\zeta+\frac{X-a}{t})\Vert_{L^\infty_{\zeta}L^1_a}&\lesssim\Vert \gamma\Vert_{Z^\prime}^2\lesssim\varepsilon^2
\end{split}
\end{equation*}

\medskip

These bounds are enough to control the $Z$-norm. Indeed, we see that
\begin{equation*}
\begin{split}
\Vert \frac{1}{t}\nabla(-\Delta)^{-1}M_j\cdot\partial_{x^p}\gamma(X,v)\Vert_{Z}&\lesssim \Vert \frac{1}{t}\nabla(-\Delta)^{-1}M_j\Vert_{L^\infty}\Vert \partial_{x^p}\gamma(X,v)\Vert_Z
\end{split}
\end{equation*}
and we can use this when $j=1,2$, while for $M_3$, we use that
\begin{equation*}
\begin{split}
\Vert \frac{1}{t}\nabla(-\Delta)^{-1}M_3\cdot\partial_{x^p}\gamma(X,v)\Vert_Z&\lesssim \Vert \frac{1}{t}\nabla(-\Delta)^{-1}M_3\cdot\mathfrak{1}_{\{\vert X\vert\le t^\frac{1}{6}\}}\partial_{x^p}\gamma(X,v)\Vert_Z\\
&\qquad+\Vert \frac{1}{t}\nabla(-\Delta)^{-1}M_3\cdot\mathfrak{1}_{\{\vert X\vert\ge t^\frac{1}{6}\}}\partial_{x^p}\gamma(X,v)\Vert_Z\\
&\lesssim \varepsilon^2t^{-1-\frac{1}{20}}\Vert \partial_{x^p}\gamma(X,v)\Vert_Z+\varepsilon^2t^{-1-\frac{1}{100}}\Vert \vert x\vert^\frac{1}{8}\nabla_x\gamma\Vert_Z
\end{split}
\end{equation*}
and again, this gives an acceptable contribution using \eqref{LocalizationInterpolationBound} below. The control of $L^2_vH^1_x$ is similar since
\begin{equation*}
\begin{split}
&\partial_{x^k}\left\{\left\{\frac{1}{t}\widetilde{E}^p-t\partial_{x^p}\phi(X+vt)\right\}\cdot\partial_{x^p}\gamma(X,v)\right\}\\
=&\frac{1}{t}\sum_j\partial_{x^p}(-\Delta)^{-1}M_j\cdot\partial_{x^p}\partial_{x^k}\gamma(X,v)-t \partial^2_{x^px^k}\phi(X+tv)\partial_{x^p}\gamma(X,v)
\end{split}
\end{equation*}
 with a new term that can be treated as follows:
\begin{equation*}
\begin{split}
t\Vert \nabla^2\phi(X+tv)\nabla_x\gamma(X,v)\Vert_{L^2_{x,v}}&\lesssim t\Vert\nabla^2\phi\Vert_{L^\infty}\Vert \nabla_x\gamma\Vert_{L^2_{x,v}}\lesssim t^{\delta-2}\varepsilon^3.
\end{split}
\end{equation*}

Finally, the control of $L^2_xH^1_v$ follows along similar lines, but requires a little more care. Indeed
\begin{equation*}
\begin{split}
&\partial_{v^k}\left\{\left\{\frac{1}{t}\widetilde{E}^p-t\partial_{x^p}\phi(X+vt)\right\}\cdot\partial_{x^p}\gamma(X,v)\right\}\\
=&\frac{1}{t}\sum_j\partial_{x^p}(-\Delta)^{-1}M_j\cdot\partial_{x^p}\partial_{v^k}\gamma(X,v)+\frac{\ln t}{t}\nabla(-\Delta)^{-1}M_j\cdot\partial_{x^p}\partial_{x^\ell}\gamma(X,v)\cdot\partial_{v^k}\widetilde{E}^\ell\\
&+\frac{1}{t}\sum_j\partial_{x^p}\partial_{x^k}(-\Delta)^{-1}M_j\cdot\partial_{x^p}\partial_{v^k}\gamma(X,v)
\end{split}
\end{equation*}
The last term is slightly singular. We can use the boundedness of the Riesz transform to control
\begin{equation*}
\begin{split}
\Vert \partial_{x^p}\partial_{x^k}(-\Delta)^{-1}M_j\cdot\partial_{x^p}\partial_{v^k}\gamma(X,v)\Vert_{L^2_{x,v}}&\lesssim \Vert M_j\Vert_{L^4_{v}}\Vert \partial_{x^p}\partial_{v^k}\gamma(X,v)\Vert_{L^4_vL^2_x}
\end{split}
\end{equation*}
and this is enough for $M_1,M_2$, and for $M_3$, we use the same decomposition to get
\begin{equation*}
\begin{split}
\Vert \partial_{x^p}\partial_{x^k}(-\Delta)^{-1}M_3\cdot\partial_{x^p}\partial_{v^k}\gamma(X,v)\Vert_{L^2_{x,v}\{\vert X\vert\le t^{\frac{1}{10}}\}}&\lesssim \varepsilon^2t^{-\frac{1}{100}}\Vert \partial_{x^p}\partial_{v^k}\gamma(X,v)\Vert_{L^4_vL^2_x}\\
\Vert \partial_{x^p}\partial_{x^k}(-\Delta)^{-1}M_3\cdot\partial_{x^p}\partial_{v^k}\gamma(X,v)\Vert_{L^2_{x,v}\{\vert X\vert\ge t^{\frac{1}{10}}\}}&\lesssim \Vert M_3\Vert_{L^6_v}\Vert \partial_{x^p}\partial_{v^k}\gamma(X,v)\Vert_{L^3_vL^2_x\{\vert X\vert\ge t^{\frac{1}{10}}\}}
\end{split}
\end{equation*}
and we can bound the last term with \eqref{LocalizationInterpolationBound}.

To finish the proof, it suffices to show that
\begin{equation}\label{LocalizationInterpolationBound}
\begin{split}
 \Vert \vert x\vert^\frac{1}{8}\nabla_{x}\gamma\Vert_{Z}+ \Vert \vert x\vert^\frac{1}{8}\nabla_{x,v}\gamma\Vert_{L^2_{x,v}}\lesssim \Vert \vert x\vert^\frac{1}{8}\gamma\Vert_{H^1_xH^{\frac{13}{8}}_v}&\lesssim \Vert x\gamma\Vert_{L^2}+\Vert \gamma\Vert_{H^3_{x,v}}\lesssim\varepsilon t^\delta.
\end{split}
\end{equation}
The first inequality follows from Sobolev embedding; the second inequality follows directly if $\gamma$ is supported on $\{\vert x\vert\le 1\}$ or is localized at small frequencies in $x$ or in $v$; in the other cases, we introduce a Littlewood-Paley decomposition as in \eqref{LPv} in $x$ ($P^x_A$) and in $v$ ($P^v_B$) to get
\begin{equation*}
\begin{split}
\Vert \vert x\vert^\frac{1}{8}\mathfrak{1}_{\{\vert x\vert\sim R\}}P^x_AP^v_B\gamma\Vert_{H^1_xH^{\frac{13}{8}}_v}&\lesssim R^\frac{1}{8} A B^\frac{13}{8}\Vert \mathfrak{1}_{\{\vert x\vert\sim R\}}P^x_AP^v_B\gamma\Vert_{L^2_{x,v}}\\
&\lesssim R^\frac{1}{8} A B^\frac{13}{8}\min\{R^{-1},A^{-3},B^{-3}\}\cdot\left[\Vert x\gamma\Vert_{L^2_{x,v}}+\Vert \gamma\Vert_{H^3_{x,v}}\right]
\end{split}
\end{equation*}
and we can sum this over dyadic $A,B,R\gtrsim 1$.

\end{proof}


\begin{thebibliography}{10}

\bibitem{BardosDegond} C. Bardos and P. Degond. Global existence for the Vlasov-Poisson equation in 3 space variables with small initial data. Ann. Inst. H. Poincar\'e Anal. Non Lin\'eaire, 2(2):101--118, 1985.

\bibitem{BedMasMou} J. Bedrossian, N. Masmoudi and C. Mouhot, Landau Damping in Finite Regularity for Unconfined Systems with Screened Interactions. Comm. Pure Appl. Math., 71: 537-576 (2018).

\bibitem{BGK} I. Bernstein, J. Greene and M. Kruskal, Exact nonlinear plasma oscillations. Phys. Rev. 108, 3, 546--550 (1957).

\bibitem{ChoiKwon} S.-H. Choi and S.~Kwon,  Modified scattering for the
  {V}lasov-{P}oisson system, Nonlinearity, 29 (2016), pp.~2755--2774.

\bibitem{GeMaSh} P. Germain, N. Masmoudi and J. Shatah, Global solutions for 3D quadratic Schr\"odinger equations, Int. Math. Res. Not. IMRN 2009, no. 3, 414--432.

\bibitem{Glassey} R. T. Glassey. The Cauchy problem in kinetic theory. Society for Industrial and Applied Mathematics (SIAM), Philadelphia, PA, 1996.

\bibitem{HPTV} Z. Hani, B. Pausader, N. Tzvetkov and N. Visciglia, Modified scattering for the nonlinear Schr\"odinger equation on product space and applications, Forum of Math., Pi, Vol. 3 / 2015, e4.

\bibitem{HwangRendallVelazquez} H.-J. Hwang, A. Rendall, and J.-L. Vel\'azquez. Optimal gradient estimates and asymptotic behaviour for the Vlasov-Poisson system with small initial data. Arch. Ration. Mech. Anal., 200(1):313--360, 2011.

\bibitem{IoPaWKG} A. Ionescu and B. Pausader, On the global regularity for a Wave-Klein-Gordon coupled system, Acta. Math. Sin.-English Ser. (2019) 35: 933.

\bibitem{IoPaEKG} A. Ionescu and B. Pausader, Global stability of solutions of the Einstein-Klein-Gordon system, preprint 2019.

\bibitem{KatoPusateri} J. Kato and F. Pusateri, A new proof of long range scattering for critical nonlinear Schr\"odinger equations, Diff. and Int. Eq. 24 (2011), no. 9-10, 923-940. 

\bibitem{LemMehRap} M. Lemou, F. M\'ehats and P. Rapha\"el, The Orbital Stability of the Ground States and the Singularity Formation for the Gravitational Vlasov Poisson System, Archive for Rational Mechanics and Analysis, 2008, 189, 425--468

\bibitem{LionsPerthame} P.-L. Lions and B. Perthame. Propagation of moments and regularity for the 3-dimensional Vlasov-Poisson system. Invent. Math., 105(2) 415--430 (1991).

\bibitem{Mouhot}
{\sc C.~Mouhot},  Stabilit\'{e} orbitale pour le syst\`eme de
  {V}lasov-{P}oisson gravitationnel (d'apr\`es
  {L}emou-{M}\'{e}hats-{R}apha\"{e}l, {G}uo, {L}in, {R}ein et al.), no.~352,
  2013, pp.~Exp. No. 1044, vii, 35--82.
\newblock S\'{e}minaire Bourbaki. Vol. 2011/2012. Expos\'{e}s 1043--1058.

\bibitem{MouVil} C. Mouhot and C. Villani, On Landau damping. Acta Math. 207(1), 29--201 (2011).

\bibitem{NguGreRod} T. Nguyen, E. Grenier and I. Rodnianski, Landau damping for analytic and Gevrey data, preprint 2020.

\bibitem{Pfa} K. Pfaffelmoser, Global classical solutions of the Vlasov-Poisson system in three dimensions for
general initial data. J. Differ. Equ. 95(2), 281--303 (1992)

\bibitem{Smu} J. Smulevici, Small data solutions of the Vlasov-Poisson system and the vector field method, Ann. PDE 2 (2016), no. 2, Art. 11.

\bibitem{Wang1} X. Wang, Propagation of regularity and long time behavior of 3D massive relativistic transport equation I: Vlasov-Nordström system,  arXiv:1804.06560.
\bibitem{Wang2} X. Wang, Propagation of regularity and long time behavior of the 3D massive relativistic transport equation II: Vlasov-Maxwell system,  arXiv:1804.06566.
\bibitem{Wang3} X. Wang, Decay estimates for the 3D relativistic and non-relativistic Vlasov-Poisson systems, arXiv:1805.10837.
\bibitem{Wang4} X. Wang, Global solution of the 3D Relativistic Vlasov-Poisson system for a class of large data,  arXiv:2003.14191.

\end{thebibliography}
\end{document}